\documentclass[runningheads,envcountsame]{llncs}
\usepackage[T1]{fontenc}
\usepackage{graphicx}
\usepackage{hyperref}
\usepackage{color}

\usepackage{amsmath}
\usepackage{amssymb}

\def\user{noname}

\usepackage{tikz}

\begin{document}



\def\AA{{\mathcal A}}
\def\BB{{\mathcal B}}
\def\CC{{\mathcal C}}
\def\DD{{\mathcal D}}
\def\EE{{\mathcal E}}
\def\FF{{\mathcal F}}
\def\GG{{\mathcal G}}
\def\HH{{\mathcal H}}
\def\II{{\mathcal I}}
\def\JJ{{\mathcal J}}
\def\KK{{\mathcal K}}
\def\LL{{\mathcal L}}
\def\MM{{\mathcal M}}
\def\NN{{\mathcal N}}
\def\OO{{\mathcal O}}
\def\PP{{\mathcal P}}
\def\QQ{{\mathcal Q}}
\def\RR{{\mathcal R}}
\def\SS{{\mathcal S}}
\def\TT{{\mathcal T}}
\def\UU{{\mathcal U}}
\def\VV{{\mathcal V}}
\def\WW{{\mathcal W}}
\def\XX{{\mathcal X}}
\def\YY{{\mathcal Y}}
\def\ZZ{{\mathcal Z}}


\def\bA{{\mathbf A}}
\def\bB{{\mathbf B}}
\def\bC{{\mathbf C}}
\def\bD{{\mathbf D}}
\def\bE{{\mathbf E}}
\def\bF{{\mathbf F}}
\def\bG{{\mathbf G}}
\def\bH{{\mathbf H}}
\def\bI{{\mathbf I}}
\def\bJ{{\mathbf J}}
\def\bK{{\mathbf K}}
\def\bL{{\mathbf L}}
\def\bM{{\mathbf M}}
\def\bN{{\mathbf N}}
\def\bO{{\mathbf O}}
\def\bP{{\mathbf P}}
\def\bQ{{\mathbf Q}}
\def\bR{{\mathbf R}}
\def\bS{{\mathbf S}}
\def\bT{{\mathbf T}}
\def\bU{{\mathbf U}}
\def\bV{{\mathbf V}}
\def\bW{{\mathbf W}}
\def\bX{{\mathbf X}}
\def\bY{{\mathbf Y}}
\def\bZ{{\mathbf Z}}


\def\IB{{\mathbb{B}}}
\def\IC{{\mathbb{C}}}
\def\IF{{\mathbb{F}}}
\def\IN{{\mathbb{N}}}
\def\IP{{\mathbb{P}}}
\def\IQ{{\mathbb{Q}}}
\def\IR{{\mathbb{R}}}
\def\IS{{\mathbb{S}}}
\def\IT{{\mathbb{T}}}
\def\IZ{{\mathbb{Z}}}

\def\IIB{{\mathbb{\mathbf B}}}
\def\IIC{{\mathbb{\mathbf C}}}
\def\IIN{{\mathbb{\mathbf N}}}
\def\IIQ{{\mathbb{\mathbf Q}}}
\def\IIR{{\mathbb{\mathbf R}}}
\def\IIZ{{\mathbb{\mathbf Z}}}


\def\ELSE{\quad\mbox{else}\quad}
\def\WITH{\quad\mbox{with}\quad}
\def\FOR{\quad\mbox{for}\quad}
\def\AND{\;\mbox{and}\;}
\def\OR{\;\mbox{or}\;}

\def\To{\longrightarrow}
\def\TO{\Longrightarrow}
\def\In{\subseteq}
\def\sm{\setminus}
\def\Inneq{\In_{\!\!\!\!/}}
\def\dmin{\mathop{\dot{-}}}
\def\splus{\oplus}
\def\SEQ{\triangle}
\def\DIV{\uparrow}
\def\INV{\leftrightarrow}
\def\SET{\Diamond}

\def\kto{\equiv\!\equiv\!>}
\def\kin{\subset\!\subset}
\def\pto{\leadsto}
\def\into{\hookrightarrow}
\def\onto{\to\!\!\!\!\!\to}
\def\prefix{\sqsubseteq}
\def\rel{\leftrightarrow}
\def\mto{\rightrightarrows}

\def\B{{\mathsf{{B}}}}
\def\D{{\mathsf{{D}}}}
\def\G{{\mathsf{{G}}}}
\def\E{{\mathsf{{E}}}}
\def\J{{\mathsf{{J}}}}
\def\K{{\mathsf{{K}}}}
\def\L{{\mathsf{{L}}}}
\def\R{{\mathsf{{R}}}}
\def\T{{\mathsf{{T}}}}
\def\U{{\mathsf{{U}}}}
\def\W{{\mathsf{{W}}}}
\def\Z{{\mathsf{{Z}}}}
\def\w{{\mathsf{{w}}}}
\def\HP{{\mathsf{{H}}}}
\def\C{{\mathsf{{C}}}}
\def\Tot{{\mathsf{{Tot}}}}
\def\Fin{{\mathsf{{Fin}}}}
\def\Cof{{\mathsf{{Cof}}}}
\def\Cor{{\mathsf{{Cor}}}}
\def\Equ{{\mathsf{{Equ}}}}
\def\Com{{\mathsf{{Com}}}}
\def\Inf{{\mathsf{{Inf}}}}

\def\Tr{{\mathrm{Tr}}}
\def\Sierp{{\mathrm Sierpi{\'n}ski}}
\def\psisierp{{\psi^{\mbox{\scriptsize\Sierp}}}}
\def\cl{{\mathrm{{cl}}}}
\def\Haus{{\mathrm{{H}}}}
\def\Ls{{\mathrm{{Ls}}}}
\def\Li{{\mathrm{{Li}}}}

\def\CL{\mathsf{CL}}
\def\ACC{\mathsf{ACC}}
\def\DNC{\mathsf{DNC}}
\def\ATR{\mathsf{ATR}}
\def\LPO{\mathsf{LPO}}
\def\LLPO{\mathsf{LLPO}}
\def\WKL{\mathsf{WKL}}
\def\RCA{\mathsf{RCA}}
\def\ACA{\mathsf{ACA}}
\def\SEP{\mathsf{SEP}}
\def\BCT{\mathsf{BCT}}
\def\IVT{\mathsf{IVT}}
\def\IMT{\mathsf{IMT}}
\def\OMT{\mathsf{OMT}}
\def\CGT{\mathsf{CGT}}
\def\UBT{\mathsf{UBT}}
\def\BWT{\mathsf{BWT}}
\def\HBT{\mathsf{HBT}}
\def\BFT{\mathsf{BFT}}
\def\FPT{\mathsf{FPT}}
\def\WAT{\mathsf{WAT}}
\def\LIN{\mathsf{LIN}}
\def\B{\mathsf{B}}
\def\BF{\mathsf{B_\mathsf{F}}}
\def\BI{\mathsf{B_\mathsf{I}}}
\def\C{\mathsf{C}}
\def\CF{\mathsf{C_\mathsf{F}}}
\def\CN{\mathsf{C_{\IN}}}
\def\CI{\mathsf{C_\mathsf{I}}}
\def\CK{\mathsf{C_\mathsf{K}}}
\def\CA{\mathsf{C_\mathsf{A}}}
\def\WPO{\mathsf{WPO}}
\def\WLPO{\mathsf{WLPO}}
\def\MP{\mathsf{MP}}
\def\BD{\mathsf{BD}}
\def\Fix{\mathsf{Fix}}
\def\Mod{\mathsf{Mod}}

\def\s{\mathrm{s}}
\def\r{\mathrm{r}}
\def\w{\mathsf{w}}

\def\leqm{\mathop{\leq_{\mathrm{m}}}}
\def\equivm{\mathop{\equiv_{\mathrm{m}}}}
\def\leqT{\mathop{\leq_{\mathrm{T}}}}
\def\lT{\mathop{<_{\mathrm{T}}}}
\def\nleqT{\mathop{\not\leq_{\mathrm{T}}}}
\def\equivT{\mathop{\equiv_{\mathrm{T}}}}
\def\nequivT{\mathop{\not\equiv_{\mathrm{T}}}}
\def\leqwtt{\mathop{\leq_{\mathrm{wtt}}}}
\def\equiPT{\mathop{\equiv_{\P\mathrm{T}}}}
\def\leqW{\mathop{\leq_{\mathrm{W}}}}
\def\equivW{\mathop{\equiv_{\mathrm{W}}}}
\def\leqtW{\mathop{\leq_{\mathrm{tW}}}}
\def\leqSW{\mathop{\leq_{\mathrm{sW}}}}
\def\equivSW{\mathop{\equiv_{\mathrm{sW}}}}
\def\leqPW{\mathop{\leq_{\widehat{\mathrm{W}}}}}
\def\equivPW{\mathop{\equiv_{\widehat{\mathrm{W}}}}}
\def\leqFPW{\mathop{\leq_{\mathrm{W}^*}}}
\def\equivFPW{\mathop{\equiv_{\mathrm{W}^*}}}
\def\leqWW{\mathop{\leq_{\overline{\mathrm{W}}}}}
\def\nleqW{\mathop{\not\leq_{\mathrm{W}}}}
\def\nleqSW{\mathop{\not\leq_{\mathrm{sW}}}}
\def\lW{\mathop{<_{\mathrm{W}}}}
\def\lSW{\mathop{<_{\mathrm{sW}}}}
\def\nW{\mathop{|_{\mathrm{W}}}}
\def\nSW{\mathop{|_{\mathrm{sW}}}}
\def\leqt{\mathop{\leq_{\mathrm{t}}}}
\def\equivt{\mathop{\equiv_{\mathrm{t}}}}
\def\leqtop{\mathop{\leq_\mathrm{t}}}
\def\equivtop{\mathop{\equiv_\mathrm{t}}}

\def\bigtimes{\mathop{\mathsf{X}}}

\def\leqm{\mathop{\leq_{\mathrm{m}}}}
\def\equivm{\mathop{\equiv_{\mathrm{m}}}}
\def\leqT{\mathop{\leq_{\mathrm{T}}}}
\def\leqM{\mathop{\leq_{\mathrm{M}}}}
\def\equivT{\mathop{\equiv_{\mathrm{T}}}}
\def\equiPT{\mathop{\equiv_{\P\mathrm{T}}}}
\def\leqW{\mathop{\leq_{\mathrm{W}}}}
\def\equivW{\mathop{\equiv_{\mathrm{W}}}}
\def\nequivW{\mathop{\not\equiv_{\mathrm{W}}}}
\def\leqSW{\mathop{\leq_{\mathrm{sW}}}}
\def\equivSW{\mathop{\equiv_{\mathrm{sW}}}}
\def\leqPW{\mathop{\leq_{\widehat{\mathrm{W}}}}}
\def\equivPW{\mathop{\equiv_{\widehat{\mathrm{W}}}}}
\def\nleqW{\mathop{\not\leq_{\mathrm{W}}}}
\def\nleqSW{\mathop{\not\leq_{\mathrm{sW}}}}
\def\lW{\mathop{<_{\mathrm{W}}}}
\def\lSW{\mathop{<_{\mathrm{sW}}}}
\def\nW{\mathop{|_{\mathrm{W}}}}
\def\nSW{\mathop{|_{\mathrm{sW}}}}

\def\botW{\mathbf{0}}
\def\midW{\mathbf{1}}
\def\topW{\mathbf{\infty}}

\def\pol{{\leq_{\mathrm{pol}}}}
\def\rem{{\mathop{\mathrm{rm}}}}

\def\cc{{\mathrm{c}}}
\def\d{{\,\mathrm{d}}}
\def\e{{\mathrm{e}}}
\def\ii{{\mathrm{i}}}

\def\Cf{C\!f}
\def\id{{\mathrm{id}}}
\def\pr{{\mathrm{pr}}}
\def\inj{{\mathrm{inj}}}
\def\cf{{\mathrm{cf}}}
\def\dom{{\mathrm{dom}}}
\def\range{{\mathrm{range}}}
\def\graph{{\mathrm{graph}}}
\def\Graph{{\mathrm{Graph}}}
\def\epi{{\mathrm{epi}}}
\def\hypo{{\mathrm{hypo}}}
\def\Lim{{\mathrm{Lim}}}
\def\diam{{\mathrm{diam}}}
\def\dist{{\mathrm{dist}}}
\def\supp{{\mathrm{supp}}}
\def\union{{\mathrm{union}}}
\def\fiber{{\mathrm{fiber}}}
\def\ev{{\mathrm{ev}}}
\def\mod{{\mathrm{mod}}}
\def\sat{{\mathrm{sat}}}
\def\seq{{\mathrm{seq}}}
\def\lev{{\mathrm{lev}}}
\def\mind{{\mathrm{mind}}}
\def\arccot{{\mathrm{arccot}}}
\def\cl{{\mathrm{cl}}}
\def\span{{\mathrm{span}}}

\def\Add{{\mathrm{Add}}}
\def\Mul{{\mathrm{Mul}}}
\def\SMul{{\mathrm{SMul}}}
\def\Neg{{\mathrm{Neg}}}
\def\Inv{{\mathrm{Inv}}}
\def\Ord{{\mathrm{Ord}}}
\def\Sqrt{{\mathrm{Sqrt}}}
\def\Re{{\mathrm{Re}}}
\def\Im{{\mathrm{Im}}}
\def\Sup{{\mathrm{Sup}}}

\def\LSC{{\mathcal LSC}}
\def\USC{{\mathcal USC}}

\def\CE{{\mathcal{E}}}
\def\Pref{{\mathrm{Pref}}}

\def\Baire{\IN^\IN}

\def\TRUE{{\mathrm{TRUE}}}
\def\FALSE{{\mathrm{FALSE}}}

\def\co{{\mathrm{co}}}

\def\BBB{{\tt B}}

\newcommand{\SO}[1]{{{\mathbf\Sigma}^0_{#1}}}
\newcommand{\SI}[1]{{{\mathbf\Sigma}^1_{#1}}}
\newcommand{\PO}[1]{{{\mathbf\Pi}^0_{#1}}}
\newcommand{\PI}[1]{{{\mathbf\Pi}^1_{#1}}}
\newcommand{\DO}[1]{{{\mathbf\Delta}^0_{#1}}}
\newcommand{\DI}[1]{{{\mathbf\Delta}^1_{#1}}}
\newcommand{\sO}[1]{{\Sigma^0_{#1}}}
\newcommand{\sI}[1]{{\Sigma^1_{#1}}}
\newcommand{\pO}[1]{{\Pi^0_{#1}}}
\newcommand{\pI}[1]{{\Pi^1_{#1}}}
\newcommand{\dO}[1]{{{\Delta}^0_{#1}}}
\newcommand{\dI}[1]{{{\Delta}^1_{#1}}}
\newcommand{\sP}[1]{{\Sigma^\P_{#1}}}
\newcommand{\pP}[1]{{\Pi^\P_{#1}}}
\newcommand{\dP}[1]{{{\Delta}^\P_{#1}}}
\newcommand{\sE}[1]{{\Sigma^{-1}_{#1}}}
\newcommand{\pE}[1]{{\Pi^{-1}_{#1}}}
\newcommand{\dE}[1]{{\Delta^{-1}_{#1}}}

\newcommand{\dBar}[1]{{\overline{\overline{#1}}}}

\def\QED{$\hspace*{\fill}\Box$}
\def\rand#1{\marginpar{\rule[-#1 mm]{1mm}{#1mm}}}

\def\BL{\BB}


\newcommand{\bra}[1]{\langle#1|}
\newcommand{\ket}[1]{|#1\rangle}
\newcommand{\braket}[2]{\langle#1|#2\rangle}

\newcommand{\ind}[1]{{\em #1}\index{#1}}
\newcommand{\mathbox}[1]{\[\fbox{\rule[-4mm]{0cm}{1cm}$\quad#1$\quad}\]}


\newenvironment{eqcase}{\left\{\begin{array}{lcl}}{\end{array}\right.}

\def\IVP{\mathsf{IVP}}
\def\sign{\mathrm{sign}}

\title{Computability of the\\ Hahn-Banach Theorem Revisited}
%
%
\author{Vasco Brattka\inst{1,2}\orcidID{0000-0003-4664-2183} \and\\
Christopher Sorg\inst{1}\orcidID{0009-0003-9684-6966}}

\authorrunning{V.\ Brattka and C.\ Sorg}
%
\institute{Fakult\"at f\"ur Informatik, Universit\"at der Bundeswehr M\"unchen, Werner-Heisenberg-Weg 39, 85577 Neubiberg, Germany \and
Department of Mathematics and Applied Mathematics, University of Cape Town, Private Bag X3, Rondebosch 7701, South Africa\\
\email{Vasco.Brattka@cca-net.de}\\
\email{chr.sorg@unibw.de}}
\maketitle              
\begin{abstract}
Computational properties of the Hahn-Banach theorem have been studied in
computable, constructive and reverse mathematics and in all these approaches
the theorem is equivalent to weak K\H{o}nig's lemma. Gherardi and Marcone
proved that this is also true in the uniform sense of Weihrauch complexity. However, their result
requires the underlying space to be variable. We prove that the Hahn-Banach
theorem attains its full complexity already for the Banach space $\ell^1$.
We also prove that the one-step Hahn-Banach theorem for this space
is Weihrauch equivalent to the intermediate value theorem. This also yields
a new and very simple proof of the reduction of the Hahn-Banach theorem
to weak K\H{o}nig's lemma using infinite products.
Finally, we show that the Hahn-Banach theorem for $\ell^1$ in the two-dimensional
case is Weihrauch equivalent to the lesser limited principle of omniscience.
 
\keywords{Computable analysis  \and Weihrauch complexity \and Hahn-Banach Theorem.}
\end{abstract}
\section{Introduction}

The Hahn-Banach theorem ($\HBT$) is one of the core theorems of functional analysis.
Here and in the following we consider only normed spaces over the field $\IR$
and all computational results are for separable spaces. By $\|f\|:=\sup_{\|x\|\leq1}|f(x)|$ we denote the {\em supremum norm} of a functional $f$ in a normed space $X$.

\begin{theorem}[Hahn-Banach]
Let $X$ be a normed space with a linear subspace $A\In X$. 
Then every linear bounded functional $f:A\to\IR$ has a linear extension $g:X\to\IR$
with $\|g\|=\|f\|$.
\end{theorem}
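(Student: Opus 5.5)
The plan is the classical route: a one-step extension lemma, followed by Zorn's lemma. In the separable setting that is the focus here, the Zorn step can be replaced by induction plus a continuity argument.

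We may assume $\|f\|=1$. The case $f=0$ is trivial, and otherwise we rescale.

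\textbf{One-step extension.} Let $B\supseteq A$ be a linear subspace, $h:B\to\IR$ linear with $\|h\|\leq 1$, and $x_0\in X\setminus B$. We want $h$ to extend to $B\oplus\IR x_0$ with norm still at most $1$. Every extension has the form $h'(b+tx_0)=h(b)+tc$, and we need to choose $c$. By homogeneity, the condition $|h'(y)|\leq\|y\|$ reduces to the case $t=\pm1$. So it suffices to find $c$ with
\[\sup_{b\in B}\bigl(h(b)-\|b-x_0\|\bigr)\leq c\leq\inf_{b'\in B}\bigl(\|b'+x_0\|-h(b')\bigr).\]
Such a $c$ exists provided $h(b)+h(b')\leq\|b+b'\|\leq\|b-x_0\|+\|b'+x_0\|$ for all $b,b'\in B$, which follows from the triangle inequality. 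Both the supremum and the infimum are finite, so any $c$ in the resulting nonempty interval works.

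\textbf{Globalization.} Consider the set of pairs $(B,h)$ with $A\In B\In X$ a subspace, $h$ extending $f$, and $\|h\|\leq1$. Order it by extension. Every chain has an upper bound, namely the union of the subspaces with the combined functional, and this combination still has norm at most $1$ because each vector lies in some member of the chain. By Zorn's lemma there is a maximal element $(B,g)$. If $B\neq X$, the one-step extension contradicts maximality, so $B=X$.

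Finally, $\|g\|\geq\|f\|$ holds automatically since $g$ extends $f$, so $\|g\|=\|f\|$. For separable $X$ one can avoid Zorn. Enumerate a dense sequence $(x_n)$, apply the one-step extension repeatedly to obtain $g$ on $A+\span\{x_n\}$, and then extend by uniform continuity to its closure, which is $X$. Extending $f$ first to the closure $\overline A$ is harmless for the same reason.

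The main obstacle is the one-step lemma, specifically verifying that the interval for $c$ is nonempty. The triangle-inequality estimate above is the key inequality, and I expect it to go through directly.
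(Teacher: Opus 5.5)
Your proof is correct and takes the route the paper itself indicates (the paper cites the theorem rather than proving it). You use the one-step extension with the same interval as Theorem~\ref{thm:one-step}, after substituting $b'=-y$, and then globalize by Zorn's lemma or, for separable $X$, by countable iteration followed by continuous extension from a dense subspace. That separable variant is exactly the pattern the paper mimics in its proof of $\HBT_X\leqW\WKL$: it iterates $\HBT^1_X$ along the fundamental sequence and recovers $g$ from these values via Proposition~\ref{prop:linear}.
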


A good survey on the history of the Hahn-Banach theorem 
is provided by Narici and Beckenstein~\cite{NB11}.
The Hahn-Banach theorem has been studied in computable analysis by Metakides, Nerode and Shore \cite{MN82,MNS85}, in reverse mathematics by Brown and Simpson~\cite{BS86a,Sim09}, and in reverse constructive analysis
by Ishihara~\cite{Ish90,DI21}.
In all these approaches the theorem turns out to be equivalent
to weak K\H{o}nig's lemma ($\WKL$). Finally, Gherardi and Marcone~\cite{GM09}
proved that this is also true in the uniform sense of Weihrauch complexity~\cite{BGP21}
(below we will define Weihrauch equivalence $\equivW$ and other relevant notions used
in the introduction).

\begin{theorem}[Gherardi and Marcone 2009]
\label{thm:GM09}
$\HBT\equivW\WKL$.
\end{theorem}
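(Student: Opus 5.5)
We prove the two reductions $\HBT\leqW\WKL$ and $\WKL\leqW\HBT$ separately. Here $\HBT$ is the problem that takes a separable normed space $X$ (given by a dense sequence with computable norm on rational combinations), a closed subspace $A$ (given by a dense sequence) and a functional $f$ on $A$ with its norm. It outputs an extension $g$ with $\|g\|=\|f\|$, represented by its values on the dense sequence. We use $\WKL\equivW\C_{2^\IN}$, i.e.\ choice for nonempty closed subsets of Cantor space, or equivalently of any computably compact space.

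\textbf{Upper bound $\HBT\leqW\WKL$.} Normalize so that $\|f\|=1$, which is harmless after dividing by the given norm; the case $\|f\|=0$ is trivial. Any admissible extension $g$ is determined by the values $g(x_n)$ on the dense sequence $(x_n)$, and these satisfy $|g(x_n)|\leq\|x_n\|$. So the set of solutions is a subset of the compact product $K:=\prod_n[-\|x_n\|,\|x_n\|]$, and $K$ is computably compact uniformly in the input. The solution set $S\In K$ is cut out by countably many conditions: for all $n,m$, all rationals $p,q$, and all $k$ with $x_k=px_n+qx_m$, we need $g(x_k)=pg(x_n)+qg(x_m)$. Since these linear relations are not decidable, we instead require, for all rational combinations, $\left|\sum_i q_ig(x_{n_i})\right|\leq\left\|\sum_i q_ix_{n_i}\right\|$. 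This single family of inequalities already enforces linearity on the rational span and a norm bound of $1$. We also require $g(a_j)=f(a_j)$ for the dense sequence $(a_j)$ of $A$; for this it is cleanest to include $(a_j)$ in the dense sequence of $X$. Each condition is co-c.e.\ closed, because the norms are computable reals, so $S$ is a $\Pi^0_1$ subset of $K$. $S$ is nonempty by the classical Hahn--Banach theorem. Applying $\C_K\leqW\C_{2^\IN}\equivW\WKL$ yields a point of $S$, and from it $g$ extends continuously to all of $X$. The bound $\|g\|\geq\|f\|$ is automatic because $g$ extends $f$.

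\textbf{Lower bound $\WKL\leqW\HBT$.} We reduce $\LLPO$-style separation: given two disjoint c.e.\ sets, or more uniformly an instance of $\C_{2^\IN}$ presented as the problem of separating a pair of disjoint enumerated sets of indices, we produce a separating set. This form is equivalent to $\WKL$. We encode the $n$-th bit by a two-dimensional gadget, and combine the gadgets using an $\ell^1$-like direct sum with weights $2^{-n}$. This keeps the norm computable, because a gadget whose geometry changes at stage $s$ contributes only a perturbation of size $2^{-s}$.

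The gadget is the space $\IR^2$ with a norm whose unit ball is a polygon that depends on the instance. The subspace $A$ is spanned by a vector $e$, and $f(e)=1=\|e\|$. If $n$ enters the first set at stage $s$, we tilt the ball slightly (by $2^{-s}$) so that the norming functional for $e$ is unique and has positive second coordinate. If $n$ enters the second set, we tilt so that the unique norming functional has negative second coordinate. If $n$ enters neither set, the ball has a corner at $e$ and both signs are allowed. The norm is computable as a limit of uniformly convergent approximations. From an extension $g$ we read off the sign of $g$ on the second basis vector of block $n$.

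The main obstacle is that this sign may be $0$, or may be arbitrarily close to $0$ at the tilted corner, so it cannot be decided. To fix this we make the two alternatives robustly separated. The ball has a corner at $e$, and tilting shifts the whole admissible interval of second coordinates to $[\delta,1]$ or $[-1,-\delta]$ with $\delta$ known, while the untouched case gives $[-1,1]$. Then, given $g$, we compare the value $t_n$ with $\pm\delta/2$ by an approximate test: we output $1$ if $t_n>-\delta/2$ and $0$ if $t_n<\delta/2$, deciding whichever side the approximation confirms first. This output is correct in the forced cases. The delicate point, which we expect to be the main technical work, is arranging that $\delta$ is independent of the stage $s$ while the norm still stays computable. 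We plan to achieve this by scaling the weight of block $n$ by $2^{-s}$ instead of shrinking the tilt. Before $n$ is enumerated the block carries a provisional weight that we cannot later adjust, so we handle it by dovetailing: block $n$ is split into copies indexed by $s$, and only the copy for the stage of enumeration gets tilted. The readout then searches over copies, which is an additional difficulty to handle carefully.
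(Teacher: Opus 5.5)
Your upper bound is correct. Since $\|f\|$ is part of the input in your formalization, the admissible value sequences form a nonempty co-c.e.\ closed subset of a computably compact product, and choice on that set reduces to $\WKL$. (From $f$ alone the norm is only left-c.e., which would spoil co-c.e.\ closedness of the norm constraints.) This classical compactness argument differs from the paper's new proof. The paper iterates the one-step theorem along the fundamental sequence and solves each step with $\IVT\equivW\C\C_{[0,1]}$ via the bounds $\sup_{y\in A}(f(y)-\|x-y\|)\leq g(x)\leq\inf_{y\in A}(f(y)+\|x-y\|)$. It then concludes with $\IVT^\infty\equivW\WKL$. Your route is more self-contained; the paper's isolates the cost of a single step and yields the finer bound $\IVT^{[n-1]}$ in dimension $n$.

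The lower bound has a genuine gap. The step you yourself call the main technical work is not carried out, and the plan sketched for it fails as stated. The reduction must commit to the $n$-th bit of the separating set after finitely many steps, even if $n$ is never enumerated. In that case no copy of block $n$ is tilted, and an admissible $g$ may take arbitrary values in $[-1,1]$ on every copy. So searching for the tilted copy need not terminate, and the untilted copies carry no information. As it stands there is no reduction $\SEP\leqW\HBT$.

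The missing idea is that the obstacle you are working around does not exist for the right gadget; this is exactly what the Gherardi--Marcone construction recalled in the paper exploits.

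Give block $n$ the norm
\begin{itemize}
\item $\max(|\varepsilon_n\alpha+\beta|,|\alpha-\beta|)$ if $\varepsilon_n<1$,
\item $\max(|\alpha+\beta|,|\varepsilon_n^{-1}\alpha-\beta|)$ if $\varepsilon_n>1$,
\item $\max(|\alpha+\beta|,|\alpha-\beta|)$ if $\varepsilon_n=1$.
\end{itemize}
Here $\varepsilon_n=\frac{1-\delta_n}{1+\delta_n}$, with $\delta_n=\pm2^{-k-1}$ if $n$ appears in $\range(p)$ resp.\ $\range(q)$ at stage $k$, and $\delta_n=0$ otherwise. In every case $\|(1,0)\|=1$.

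For $\varepsilon_n=1$ the unit ball has a corner at $(1,0)$, and the norming functionals are $\alpha+t\beta$ with $t\in[-1,1]$. For every $\varepsilon_n\neq1$, however close to $1$, the point $(1,0)$ lies in the relative interior of the edge $\alpha-\beta=1$ (resp.\ $\alpha+\beta=1$). So the norming functional is exactly $\alpha-\beta$ (resp.\ $\alpha+\beta$).

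Now take the weighted $\ell^1$-sum with $f_{p,q}(x)=\sum_n2^{-n-1}x_{n,0}$. The size $2^{-k-1}$ of the perturbation only controls how much the norm changes, which gives computability of $\sum_n2^{-n-1}\|x_n\|_{p,q,n}$. Meanwhile, once $n$ is enumerated, $2^{n+1}g(z_n)$ is forced to be exactly $-1$ or $+1$, independently of $k$.

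No copies or stage-dependent weights are needed. Your expectation that the forced coordinate shrinks with the tilt would be right near a smooth boundary point. It fails at a corner, where the set of norming functionals jumps discontinuously.
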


Using a Kleene tree, one obtains a result of Metakides, Nerode and Shore~\cite{MNS85}
as a corollary of this classification.

\begin{corollary}[Metakides, Nerode and Shore 1985]
\label{cor:MNS85}
There exists a computable Banach space $X$ with a computably separable
subspace $A\In X$ and a computable functional $f:A\to\IR$ with $\|f\|=1$,
such that $f$ has no computable linear extension $g:X\to\IR$ with $\|g\|=1$.
\end{corollary}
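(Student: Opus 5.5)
We derive Corollary~\ref{cor:MNS85} from Theorem~\ref{thm:GM09}, using only the direction $\WKL\leqW\HBT$ and a computable instance of $\WKL$ without computable solutions. The uniformity of the Weihrauch reduction is what turns a non-computable $\WKL$-instance into a non-computable $\HBT$-instance.

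\textbf{Step 1: a $\WKL$-instance with no computable path.} We take a Kleene tree, that is, a computable infinite binary tree $T\In\{0,1\}^*$ with no computable infinite path. One way to obtain it is from a pair of computably inseparable c.e.\ sets $A,B\In\IN$: let $T$ consist of the finite strings $w$ that do not, within $|w|$ steps, visibly violate the requirement of separating $A$ from $B$. Then $T$ is a legitimate computable input to $\WKL$, and none of its infinite paths is computable.

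\textbf{Step 2: transfer through the reduction.} By Theorem~\ref{thm:GM09} we have $\WKL\leqW\HBT$. So there are computable functions $H,K$ with the following properties.
\begin{itemize}
\item $K$ maps any name $p$ of a $\WKL$-instance to a name $K(p)$ of an $\HBT$-instance.
\item $H\langle p,q\rangle$ is an infinite path of the tree named by $p$ whenever $q$ names a solution of $K(p)$.
\end{itemize}
Let $p$ be a computable name of $T$. Then $K(p)$ is computable, so it names a computable $\HBT$-instance: a computable Banach space $X$, a subspace $A\In X$ given via a computable dense sequence (hence computably separable), and a computable bounded functional $f:A\to\IR$.

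Suppose this instance had a computable solution $g$, i.e.\ a computable name $q$. Then $H\langle p,q\rangle$ would be a computable infinite path through $T$, which contradicts Step~1. Hence every linear extension $g$ of $f$ with $\|g\|=\|f\|$ is non-computable.

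\textbf{Step 3: normalization to $\|f\|=1$.} This is the main point needing care. The representation of $\HBT$-instances fixed later in the paper must make "computable instance" deliver exactly the data in the corollary, including computable completeness of $X$ and that the $\|f\|$ supplied is the true norm. If $\|f\|=0$, then $g=0$ is a computable solution, contradicting Step~2; so $\|f\|>0$.

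Moreover $\|f\|$ is computable, as part of the instance or derivable from it. We can then replace $f$ by $f/\|f\|$ and $g$ by $g/\|f\|$. Scaling by a computable nonzero real preserves computability in both directions, so $f/\|f\|$ has norm $1$ and still has no computable norm-preserving extension.

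If the instance instead encodes the norm only as an upper bound, we would first check that the $\HBT$ formulation in Theorem~\ref{thm:GM09} requires $\|f\|$ to be given exactly. This is the standard formulation.
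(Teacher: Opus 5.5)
Your strategy (push a Kleene tree through the uniform reduction $\WKL\leqW\HBT$) is the one the paper intends, and Steps~1 and~2 are correct. Step~3, however, has a real gap.

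You claim $\|f\|$ is computable ``as part of the instance or derivable from it''. The second option is false. For a computable functional on an infinite-dimensional computably separable subspace, $\|f\|$ is in general only lower semicomputable. For example, let $(w_n)$ be a computable increasing sequence of rationals in $[0,1]$ converging to a non-computable left-c.e.\ real $r$. Then $z\mapsto\sum_n w_nz_n$ is computable on $\ell^1$, since the bound $1$ controls the tails, but its norm is $r$.

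The first option depends on a formulation you have not pinned down, and it is not the one written in this paper. There $\HBT_X$ receives only $(f,A)\in\CC_\SS(X)$, and Proposition~\ref{prop:linear} extracts from this merely an upper bound $M\geq\|f\|$. Computability of the operator norm is invoked only in the finite-dimensional setting of Section~\ref{sec:ell1-2}. So if you use the reduction as a black box, the computable instance produced by $K$ may have a non-computable norm. In that case $f/\|f\|$ is itself non-computable: evaluate it at a computable point where $f\neq0$ and you would recover $\|f\|$. Your rescaling then does not yield a computable instance with norm $1$.

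The fix is to open the black box rather than normalize. What makes the paper's one-line derivation work is that the instances produced by the Gherardi--Marcone reduction already satisfy $\|f_{p,q}\|=1$. Take computable $p,q$ enumerating a pair of computably inseparable c.e.\ sets; this is the Kleene tree in $\SEP$-form. Then use the instance $(X_{p,q},A_{p,q},f_{p,q})$ recalled in Section~\ref{sec:ell1}:
\begin{itemize}
\item $X_{p,q}$ is a computable Banach space and $A_{p,q}$ is computably separable;
\item $f_{p,q}$ is computable with $\|f_{p,q}\|=1$ by construction, so no rescaling is needed;
\item by (\ref{eq:B}), any computable extension $g$ with $\|g\|=1$ would compute a separating set.
\end{itemize}
Transporting along $F_{p,q}$ even gives $X=\ell^1$.

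If you prefer to keep the black-box argument, you must explicitly adopt a version of $\HBT$ whose instances carry $\|f\|$ exactly. You must then verify that the reduction $\WKL\leqW\HBT$ you invoke produces such instances, and that verification again rests on $\|f_{p,q}\|=1$.
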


Both aforementioned results work with 
the construction of a variable Banach space $X$ that depends
on the respective instance of weak K\H{o}nig's lemma.
We investigate the question whether there is a fixed computable Banach space $X$
for which the Hahn-Banach theorem $\HBT_X$ is 
Weihrauch equivalent to weak K\H{o}nig's lemma.

Every singlevalued problem that maps to a computable metric spaces
and is Weihrauch reducible to $\WKL$ is already computable~\cite[Theorem~8.8]{BG11}.
Hence, Theorem~\ref{thm:GM09}
implies that $\HBT_X$ is computable if the computable normed space $X$
has the property that all linear bounded functionals on subspaces of $X$
have unique norm preserving extensions (this was noted also in~\cite{Bra08b}).

Due to a classical result of Taylor and Foguel~\cite[Theorem~16.4.8]{NB11}, 
it is known that extensions are uniquely determined for a space $X$
(for all subspaces and functionals) if and only if 
the dual space of $X$ is strictly convex. Hence we obtain the following.

\begin{corollary}
$\HBT_X$ is computable for every computable normed space $X$ with
a strictly convex dual space.
\end{corollary}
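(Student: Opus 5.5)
The plan is to combine the three ingredients stated just before the corollary: the classification $\HBT\equivW\WKL$ of Gherardi and Marcone (Theorem~\ref{thm:GM09}), the fact that singlevalued problems into computable metric spaces that are Weihrauch reducible to $\WKL$ are computable \cite[Theorem~8.8]{BG11}, and the Taylor--Foguel theorem. First, for a fixed computable normed space $X$, the problem $\HBT_X$ is a restriction of the general problem $\HBT$ to instances whose space is $X$. Computing the name of $X$ from the instance is trivial, since $X$ is fixed and computable. Hence $\HBT_X\leqW\HBT\equivW\WKL$.

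Next, by Taylor--Foguel, strict convexity of the dual $X'$ means that for every subspace $A\In X$ and every bounded linear $f:A\to\IR$ there is exactly one norm preserving extension $g$. So $\HBT_X$ is a singlevalued problem. Its output $g$ is a functional on $X$. I would represent it in the way the paper does for $\HBT$, presumably by its values $(g(e_n))_n$ on a computable dense sequence $(e_n)$ of $X$, together with a bound on $\|g\|=\|f\|$. The output thus lies in a computable metric space, for instance $\IR^\IN$ with the product metric.

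Then \cite[Theorem~8.8]{BG11} applies and yields that $\HBT_X$ is computable.

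The main obstacle is making sure the output space genuinely fits the hypothesis of \cite[Theorem~8.8]{BG11}. The dual space $X'$ with its usual representation need not be a computable metric space in the norm topology. I would handle this by observing that the representation used for functionals is equivalent to one given by a point of the computable metric space $\IR^\IN$, namely the value sequence on the dense set. Uniqueness of $g$ implies uniqueness of this point, so the singlevalued map into $\IR^\IN$ is computable. From computable values $g(e_n)$ and the known bound on the norm, $g(x)$ is computable for every $x$ by approximation. This recovers $g$ in the intended representation.
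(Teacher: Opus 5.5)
Your proposal is the paper's own argument: $\HBT_X\leqW\HBT\equivW\WKL$ by Theorem~\ref{thm:GM09}, singlevaluedness by Taylor--Foguel, and computability by \cite[Theorem~8.8]{BG11}. Your passage through the value sequence $(g(e_n))_n\in\IR^\IN$, with the norm bound computed from the input $f$ via Proposition~\ref{prop:linear} rather than carried in the output, makes explicit a point the paper leaves tacit, namely that $\CC(X)$ need not itself be a computable metric space.

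Like the paper, you take for granted that $\HBT_X$ is a restriction of the Gherardi--Marcone problem, and this presupposes that $\|f\|$ is known (e.g.\ normalized to $\|f\|=1$ as in Corollary~\ref{cor:MNS85}). That assumption does real work: with $f$ given only by a realizer on $A\in\SS(X)$, one can let $A$ be $\IR\times\{0\}$ or $\IR^2$ according to an $\LPO$ instance and take $f(u,v)=u+2v$ on $\ell^2_2$; the unique norm-preserving extension $g$ then has $g(0,1)\in\{0,2\}$ and so decides $\LPO$.
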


We recall that the normed spaces $\ell^p(I):=\{x\in\IR^I:\|x\|_p<\infty\}$
over countable sets $I$ are defined for $1\leq p\leq\infty$ by
\[\|(x_i)_{i\in I}\|_p:=\left(\sum_{i\in I}|x_i|^p\right)^\frac{1}{p}\text{ if $p<\infty$ and }\|(x_i)_{i\in I}\|_\infty:=\sup_{i\in I}|x_i|.\]
For $I=\IN$ we briefly write $\ell^p$ and for $I=\{0,1\}$ we write $\ell^p_2$.
The spaces $\ell^p$ for $p$ with $1<p<\infty$ are known
to be strictly convex~\cite[Exercise~16.201]{NB11} and hence so are their dual spaces.
Thus, in a certain sense the space $\ell^1$ is a simple example
of an infinite-dimensional computable normed space whose dual $\ell^\infty$ is not strictly convex. Indeed the Hahn-Banach Theorem already exhibits its full complexity 
on $\ell^1$. One of our main results is the following.

\begin{theorem}
\label{thm:ell1}
	$\HBT_{\ell^1}\equivW\WKL$.
\end{theorem}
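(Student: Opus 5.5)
The upper bound is immediate and the real work is the lower bound. For the upper bound, $\HBT_{\ell^1}$ is a restriction of the general problem $\HBT$: the computable Banach space is fixed to $\ell^1$, and the subspace and functional remain variable. Hence $\HBT_{\ell^1}\leqW\HBT\equivW\WKL$ by Theorem~\ref{thm:GM09}. For the lower bound I will use the known equivalence $\WKL\equivW\widehat{\LLPO}$, the infinite parallelization of the lesser limited principle of omniscience. It then suffices to show $\widehat{\LLPO}\leqW\HBT_{\ell^1}$.

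\emph{Splitting into blocks.} I use $\LLPO$ in its real-number form: given a real $t$, output a sign $s\in\{0,1\}$ with $s=0\Rightarrow t\le0$ and $s=1\Rightarrow t\ge 0$. I view $\ell^1$ as the $\ell^1$-sum of countably many copies of $\ell^1_2$, with block $n$ on coordinates $2n,2n+1$. Its dual $\ell^\infty$ is then the $\ell^\infty$-sum of copies of $\ell^\infty_2$. The point is that a functional $g=(g_i)$ has $\|g\|\le1$ if and only if every block restriction has norm at most $1$, i.e.\ $|g_i|\le 1$ for all $i$. So a norm-$1$ extension on all of $\ell^1$ is exactly a simultaneous norm-$\le1$ extension in every block, which turns the infinite product into a single instance.

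\emph{The gadget in one block.} Given $t_n$, I take $a_n:=e_{2n}+t_ne_{2n+1}$, so $\|a_n\|_1=1+|t_n|$, and I set $f(a_n):=1+|t_n|$.
\begin{itemize}
\item Any extension $g$ with $\|g\|\le1$ satisfies $g_{2n}+t_ng_{2n+1}=1+|t_n|$ with $|g_{2n}|,|g_{2n+1}|\le1$.
\item This forces $g_{2n}=1$ and $t_ng_{2n+1}=|t_n|$.
\item Consequently $t_n>0$ implies $g_{2n+1}=1$, and $t_n<0$ implies $g_{2n+1}=-1$.
\end{itemize}
From $g_{2n+1}$ I compute a sign by searching in parallel for evidence of $g_{2n+1}>-1$ or of $g_{2n+1}<1$. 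At least one of these holds, and each is semi-decidable. If $g_{2n+1}>-1$ is found, then $t_n<0$ is excluded, so I output $t_n\ge0$; symmetrically in the other case. Hence the backward reduction is computable and uniform in $n$.

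\emph{The global instance.} Let $A$ be the closed span of all the $a_n$, given by the computable dense sequence of rational combinations of the $a_n$. On $A$, the linear functional $f$ acts coordinatewise as the functional $(1,1,\dots)\in\ell^\infty$ restricted to $A$, since $(1,1)\cdot a_n=1+t_n$. That does not match $1+|t_n|$, so instead I note that $f$ is the restriction of $h$ with $h_{2n}=1$ and $h_{2n+1}=\sign(t_n)$. Then $\|f\|\le1$, and equality holds because $f(a_n)=\|a_n\|_1$. The values of $f$ on the dense sequence are computable from $(t_n)$ without knowing any signs, since $f(\sum_n c_na_n)=\sum_n c_n(1+|t_n|)$. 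Feeding $(\ell^1,A,f)$ to $\HBT_{\ell^1}$ and decoding each block as above gives $\widehat{\LLPO}\leqW\HBT_{\ell^1}$.

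\emph{Main obstacle.} The step I expect to be delicate is matching the exact input representation of $\HBT$ used by Gherardi and Marcone. It may require, for example, a dense sequence of linearly independent vectors, or the norm $\|f\|$ given as data, or $A$ given as a closed subspace in some specific representation. I would deal with this by scaling the generators to $2^{-n}a_n$, so that all sums converge uniformly in $\ell^1$, and by supplying the exact value $\|f\|=1$, which the gadget provides regardless of the $t_n$. The case $t_n=0$ is harmless: $a_n=e_{2n}$ is still a unit vector and any $g_{2n+1}\in[-1,1]$ is a correct answer.
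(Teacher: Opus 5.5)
Your proof is correct, but the lower bound takes a genuinely different route from the paper's.

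\textbf{The paper's route.} It reduces $\SEP$ to $\HBT_{\ell^1}$ by transporting the Gherardi--Marcone construction into $\ell^1$. It builds a uniformly computable linear isometry $F_{p,q}:X_{p,q}\to\ell^1$ out of the variable space $X_{p,q}$, an $\ell^1$-sum of weighted two-dimensional blocks whose norms encode $(p,q)$. It then needs the uniform computable inverse mapping theorem, justified by $\|F_{p,q}^{-1}\|=1$, to obtain $f=f_{p,q}\circ F_{p,q}^{-1}$ as a point of $\CC_\SS(\ell^1)$.

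\textbf{Your route.} You parallelize the Ishihara gadget, which the paper uses only for $\LLPO\leqW\HBT_{\ell^1_2}$ in Section~\ref{sec:ell1-2}, across the blocks of $\ell^1$. You then invoke the standard equivalence $\WKL\equivW\widehat{\LLPO}$. The translation from the paper's sequence form of $\LLPO$ to the real-number form is uniform, so it parallelizes. The key fact is that the dual of $\ell^1$ is $\ell^\infty$: a single extension of norm at most $1$ has norm at most $1$ in every block simultaneously.

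\textbf{What your route buys.} It is more elementary and self-contained. Your subspace is simply $A=\{x\in\ell^1: x_{2n+1}=t_nx_{2n}\text{ for all }n\}$. Your functional is $f(x)=\sum_n x_{2n}(1+|t_n|)$, which can be evaluated directly on $\ell^1$-names of points of $A$, with tails controlled by $\|x\|_1$. Equivalently, values on the dense sequence plus the bound $1$ suffice, since you can search for approximations within $A$. So no inverse mapping theorem is needed, and the representation worry in your last paragraph disappears. Your construction also shows that $\HBT_{\ell^1}$ absorbs the parallelization of the two-dimensional problem, which links Sections~\ref{sec:ell1-2} and~\ref{sec:ell1}. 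It adapts directly to the located version of Section~\ref{sec:located}, because $d_A(x)=\sum_n|x_{2n+1}-t_nx_{2n}|/\max(1,|t_n|)$ is computable.

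\textbf{What the paper's route buys.} It shows that the original variable-space construction itself embeds isometrically into the fixed space $\ell^1$. In other words, the complexity can be shifted from the space into the subspace and functional, a transfer technique of independent interest.

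\textbf{Upper bound.} Your argument matches the paper's reliance on Gherardi--Marcone. The paper additionally gives a new proof via $\IVT^\infty$.
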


We are going to prove this result in Section~\ref{sec:ell1}.
In Section~\ref{sec:located} we strengthen this result
and we show that it does not make the problem $\HBT_{\ell^1}$ simpler,
if we provide more information on the subspace $A$ using its distance function.

Historically, Helly first proved the Hahn-Banach theorem for specific 
normed spaces $X$ inductively, starting from the following
one-step version of the theorem~\cite[Theorem~7.3.1]{NB11}.

\begin{theorem}[One-step Hahn-Banach theorem]
\label{thm:one-step}
Let $X$ be a normed space with a linear subspace $A\In X$ and $x\in X$.
Then every linear functional $f:A\to\IR$ with $\|f\|\leq1$ has a linear extension
$g:A\!+\!\IR x\to\IR$ with $\|g\|\leq1$. In fact, a linear extension $g:A\!+\!\IR x\to\IR$ 
of $f$ satisfies $\|g\|\leq1$ if and only if
\[\sup_{y\in A}(f(y)-\|x-y\|)\leq g(x)\leq\inf_{y\in A}(f(y)+\|x-y\|).\]
In particular, both bounds exist and fall into the interval $[-\|x\|,\|x\|]$.
\end{theorem}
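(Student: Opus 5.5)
The plan is to prove the characterization of norm-bounded extensions first, and then get existence from it by showing that the supremum does not exceed the infimum. If $x\in A$ there is nothing to prove, since then $A+\IR x=A$ and $g=f$; both bounds then equal $f(x)$, because for $y\in A$ we have $f(y)-\|x-y\|\leq f(x)\leq f(y)+\|x-y\|$, and $y=x$ attains them. So assume $x\notin A$. Every linear extension $g$ of $f$ to $A+\IR x$ is then determined by the value $c:=g(x)$, via $g(y+tx)=f(y)+tc$. Conversely, every $c\in\IR$ gives such an extension.

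\textbf{Characterization.} The condition $\|g\|\leq1$ means $|f(y)+tc|\leq\|y+tx\|$ for all $y\in A$ and $t\in\IR$. For $t=0$ this is just $\|f\|\leq1$. For $t\neq0$ we divide by $|t|$ and replace $y$ by $-ty'$, which is allowed since $A$ is a subspace. The condition then becomes $|c-f(y')|\leq\|x-y'\|$ for all $y'\in A$. This is equivalent to
\[f(y')-\|x-y'\|\leq c\leq f(y')+\|x-y'\|\quad\text{for all }y'\in A.\]
Taking the supremum on the left and the infimum on the right gives exactly the stated double inequality. Every step is reversible, so it is an ``if and only if''.

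\textbf{Existence.} It remains to show that the supremum is at most the infimum, i.e.\ that for all $y,z\in A$
\[f(y)-\|x-y\|\leq f(z)+\|x-z\|.\]
This follows from $f(y)-f(z)=f(y-z)\leq\|y-z\|\leq\|y-x\|+\|x-z\|$, using $\|f\|\leq1$ and the triangle inequality. Hence the supremum is finite, since it is bounded above by any single term of the infimum, and likewise the infimum is finite. Any $c$ between them, for instance the supremum itself, yields the required extension $g$.

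\textbf{Bounds.} Choosing $y=0\in A$ gives $\sup_{y\in A}(f(y)-\|x-y\|)\geq-\|x\|$ and $\inf_{y\in A}(f(y)+\|x-y\|)\leq\|x\|$. Combined with sup $\leq$ inf, both bounds lie in $[-\|x\|,\|x\|]$.

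The only mildly delicate step is the rescaling $y=-ty'$ in the characterization, including the sign handling for $t<0$. Taking absolute values handles both signs at once, so no real obstacle is expected.
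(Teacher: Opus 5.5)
Your proof is correct and complete, including the degenerate case $x\in A$ and the finiteness and location of both bounds in $[-\|x\|,\|x\|]$. The paper does not prove this statement itself but cites Narici and Beckenstein~\cite[Theorem~7.3.1]{NB11}, and your argument is essentially the standard textbook proof behind that citation: the reversible rescaling $y=-ty'$ gives the characterization, and $f(y)-f(z)\leq\|y-x\|+\|x-z\|$ gives $\sup\leq\inf$.
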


Since computing a value in between two numbers that are 
given as a supremum or infimum, respectively, is exactly
what the intermediate value theorem enables us to do,
we can conclude that the one-step Hahn-Banach theorem ($\HBT^1$)
is reducible to the intermediate value theorem ($\IVT$).

\begin{proposition}
\label{prop:HBT1-IVT}
$\HBT_X^1\leqW\IVT$ for every computable normed space $X$.
\end{proposition}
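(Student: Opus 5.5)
The plan is to reduce $\HBT_X^1$ to connected choice $\C_{[0,1]}$ restricted to closed intervals (or directly to $\C_{[a,b]}$ for a rational interval), and then use the known equivalence $\IVT\equivW\C_{[0,1]}$ of Brattka and Gherardi. The idea is that Theorem~\ref{thm:one-step} reduces the extension problem to choosing one real number $c=g(x)$ in the interval $[s,t]$, where
\[s:=\sup_{y\in A}(f(y)-\|x-y\|),\qquad t:=\inf_{y\in A}(f(y)+\|x-y\|).\]
Once $c$ is chosen, the extension is given by $g(a+\lambda x):=f(a)+\lambda c$ for $a\in A$ and $\lambda\in\IR$.

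First, I will fix the representation. The subspace $A$ is given by a sequence $(a_n)$ whose linear span is dense in $A$. The functional $f$ is given via its values $f(a_n)$, together with the bound $\|f\|\le1$. The point $x$ is given as a point of the computable normed space $X$. The set $Q$ of rational linear combinations of the $a_n$ is dense in $A$, and $f$ and the norm are computable on $Q$ uniformly in the input. Since $y\mapsto f(y)-\|x-y\|$ is continuous, the supremum over $A$ equals the supremum over $Q$. Hence $s$ is computable from below (left-c.e.\ relative to the input) and, symmetrically, $t$ is computable from above. By Theorem~\ref{thm:one-step} we have $s\le t$ and $[s,t]\In[-\|x\|,\|x\|]$. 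We can also compute a rational $r\ge\|x\|$, say $r=1+$ an upper approximation of $\|x\|$.

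Second, I will produce a negative-information name of the closed interval $[s,t]\In[-r,r]$. Its complement in $[-r,r]$ is $[-r,s)\cup(t,r]$, and rational open intervals in this complement can be enumerated from the lower approximations of $s$ and the upper approximations of $t$. The set $[s,t]$ is nonempty and connected (possibly a single point, e.g.\ when $x\in A$). So after the computable affine rescaling $[-r,r]\to[0,1]$ this is a valid instance of $\C_{[0,1]}$. Alternatively, one can write down an $\IVT$ instance directly, for example the function $h(z):=\min(\max(z-t,0),\ldots)$-type piecewise linear function, but going through connected choice avoids having to evaluate $h$ with only semicomputable endpoints.

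Third, any solution $c\in[s,t]$ returned by $\C_{[0,1]}$ (rescaled back) gives the output. The extension is described on the generating sequence $(x,a_0,a_1,\dots)$ of $A+\IR x$ by the values $c,f(a_0),f(a_1),\ldots$, and by Theorem~\ref{thm:one-step} it has norm $\le1$.

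I expect the main obstacle to be bookkeeping about representations rather than mathematics. One issue is making sure the output format of $\HBT^1_X$ (a functional on $A+\IR x$ given via a generating sequence) matches the paper's convention. Another is ensuring that the degenerate cases, $x\in A$ or $x=0$, need no case distinction; the negative-information description of $[s,t]$ handles these uniformly. The remaining ingredient is the cited equivalence $\IVT\equivW\C_{[0,1]}$.
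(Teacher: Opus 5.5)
Your reduction is essentially the paper's. By Theorem~\ref{thm:one-step} the task reduces to picking $g(x)\in[s,t]$, where $s$ is computable from below and $t$ from above in the input. That is an instance of connected choice: the paper's definition of $\C\C_{[0,1]}$ by monotone sequences $a_n\nearrow s$, $b_n\searrow t$ makes this immediate, and your negative-information description is equivalent. The paper then uses $\IVT\equivW\C\C_{[0,1]}$ and rebuilds $g$ via Proposition~\ref{prop:linear}, as you do. One notational point: write $\C\C_{[0,1]}$, because plain $\C_{[0,1]}$ denotes closed choice, which is equivalent to $\WKL$ rather than to $\IVT$.

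The one step where you differ is the normalization. You take ``$\|f\|\le 1$'' as part of the input. But $\dom(\HBT^1_X)$ contains every $f$ with $0<\|f\|<\infty$, the norm is not supplied, and the output has to satisfy $\|g\|\le\|f\|$. The paper handles this by dividing by an upper bound $M\ge\|f\|$ obtained from Proposition~\ref{prop:linear} and multiplying by $M$ at the end.

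Be aware that this only yields $\|g\|\le M$. In fact the literal requirement cannot be met by any reduction to $\IVT$. Work in $\ell^1_2$ and, for $p\in2^\IN$, set
\begin{itemize}
\item $\epsilon:=\sum_n p(n)2^{-n-1}$;
\item $A:=\overline{\{(r,q\epsilon):r,q\in\IQ\}}$, so $A=\IR\times\{0\}$ if $\epsilon=0$ and $A=\IR^2$ otherwise;
\item $f(u,v):=u+2v$ on $A$, and $x:=(0,1)$.
\end{itemize}
If $\epsilon>0$, then $g=f$ is forced and $g(0,1)=2$. If $\epsilon=0$, then $\|f\|=1$ forces $|g(0,1)|\le 1$. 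Hence $\LPO\leqW\HBT^1_{\ell^1_2}$. This is incompatible with $\HBT^1_{\ell^1_2}\leqW\IVT\leqW\WKL$, since $\LPO\nleqW\WKL$.

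So the convention you adopted implicitly is exactly what the statement needs: a bound $M\ge\|f\|$ (or $\|f\|$ itself) is part of the input, and the extension only needs $\|g\|\le M$. State this explicitly and reduce to $M=1$ by scaling. With that, your argument is complete and coincides with the paper's.
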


Our second main result that we prove in Section~\ref{sec:one-step}
is that also the one-step theorem exhibits its most complex behavior 
already on the space $X=\ell^1$.

\begin{theorem}
	$\HBT_{\ell^1}^1\equivW\IVT$.
\end{theorem}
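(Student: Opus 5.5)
The upper bound $\HBT^1_{\ell^1}\leqW\IVT$ is the case $X=\ell^1$ of Proposition~\ref{prop:HBT1-IVT}. So the work is the reduction $\IVT\leqW\HBT^1_{\ell^1}$.

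For this I would use the known equivalence of $\IVT$ with connected choice $\CC_{[0,1]}$ (see \cite{BGP21}). In its interval form, an instance of $\CC_{[0,1]}$ is a nonempty interval $[a,b]\In[0,1]$. It is given by a nondecreasing rational sequence $(a_n)$ with $\sup_n a_n=a$ and a nonincreasing rational sequence $(b_n)$ with $\inf_n b_n=b$, where $a\leq b$. The task is to find some $c\in[a,b]$. Since all $a_n,b_n\in[0,1]$ can be assumed, it suffices to compute from such data an instance of $\HBT^1_{\ell^1}$ whose solutions $g$ satisfy $g(x)\in[a,b]$.

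The construction uses the unit vectors $e_k$ of $\ell^1$. Put $x:=e_0$, and for each $n$ define
\[u_n:=e_0+e_{2n+1},\qquad w_n:=e_0+e_{2n+2}.\]
Let $A$ be the closed linear span of $\{u_n,w_n:n\in\IN\}$. These vectors are linearly independent, since each one carries its own coordinate. Define $f$ on the generators by
\[f(u_n):=a_n+1,\qquad f(w_n):=b_n-1,\]
and extend linearly. All of these data are computable from $(a_n)$ and $(b_n)$. The subspace $A$ is given by a computable dense sequence of rational combinations of generators, and $f$ by its values on that sequence.

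The key step is to check that $\|f\|\leq1$. I would not estimate norms of combinations directly. Instead, fix any $c\in[a,b]$ and define $h$ on all of $\ell^1$ by
\[h(e_0):=c,\qquad h(e_{2n+1}):=a_n+1-c,\qquad h(e_{2n+2}):=b_n-1-c.\]
Because $a_n\leq a\leq c\leq b\leq b_n$ and all these numbers lie in $[0,1]$, the first family of values lies in $[0,1]$ and the second in $[-1,0]$. Hence $\|h\|=\sup_k|h(e_k)|\leq1$, and $h$ extends $f$. So $\|f\|\leq1$ and the instance is legitimate. This is the only step where the hypothesis $a\leq b$ enters.

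Conversely, let $g$ be any solution, i.e.\ a linear extension of $f$ to $A+\IR e_0$ with $\|g\|\leq1$. By the characterization in Theorem~\ref{thm:one-step},
\[f(u_n)-\|e_0-u_n\|_1=a_n\leq g(e_0)\leq b_n=f(w_n)+\|e_0-w_n\|_1\]
for all $n$. Therefore $g(e_0)\in[a,b]$. The output reduction just evaluates $g$ at $x=e_0$.

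The main obstacle I expect is matching the precise representations the paper fixes for $\HBT^1_X$: how subspaces, functionals and the extension $g$ are given. The inputs must be producible uniformly as described, and $g(x)$ must be extractable from the output. If subspaces are instead given with more information, for example their distance function, a different construction would be needed. For the plain representation by dense sequences, the reduction above should go through directly.
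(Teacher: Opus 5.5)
Your proof is correct, but your lower bound uses a different and simpler construction than the paper's.

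\textbf{The paper's route.} The paper works in $\ell^1(\IN\times\{0,1\})$ with a subspace that depends on the input. It is spanned by $u_n=e_{n,0}+\alpha_n e_{n,1}$, where $\alpha_n=\frac{a_n-b_n}{2}$, and $v_n=e_{n,0}-e_{n+1,0}$. The functional is $f(u_n)=\beta_n=\frac{a_n+b_n}{2}$, $f(v_n)=0$, and the point is $x=\sum_n 2^{-n-1}e_{n,0}$. To analyse an output $h$, it extends $h$ to all of $\ell^1$ by the classical Hahn--Banach theorem and represents the extension by some $w\in\ell^\infty$ with $\|w\|_\infty\le1$. The $v_n$ force all $w_{n,0}$ to equal one value $y$, the $u_n$ give $y\in[a_n,b_n]$, and then $h(x)=y$.

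\textbf{Your route.} You fix $x=e_0$ and a subspace $A$ that does not depend on the input, so only $f$ carries the data. You read off $a_n\le g(e_0)\le b_n$ directly from the elementary half of Theorem~\ref{thm:one-step}. Equivalently, this follows from $|g(e_k)|\le 1$ applied to $e_{2n+1}=u_n-e_0$ and $e_{2n+2}=w_n-e_0$. On the output side you need neither the classical theorem nor the duality $(\ell^1)^*=\ell^\infty$. Both proofs check $\|f\|\le1$ the same way, by an explicit $\ell^\infty$-extension built from any point of $[a,b]$.

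\textbf{What your version adds.} Your $A$ is the kernel of the functional given by $(1,-1,-1,\dots)\in\ell^\infty$. So it is a fixed located hyperplane with $d_A(z)=|z_0-\sum_{k\ge1}z_k|$, and $A+\IR e_0=\ell^1$. Hence the lower bound $\C\C_{[0,1]}\leqW\HBT^1_{\ell^1}$ holds even with a fixed located subspace and a fixed point $x$; only the functional varies.

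\textbf{Two small points to tidy.}
\begin{enumerate}
\item A solution of $\HBT^1_{\ell^1}$ only satisfies $\|g\|\le\|f\|$, not $\|g\|\le 1$. This is harmless because $\|f\|\le1$. Note that $\|f\|$ can be strictly smaller: for $a_n=0$, $b_n=1$ one gets $\|f\|=\frac12$.
\item You should check $\|f\|>0$ so that the instance lies in the domain. This holds because $f(u_0)\ge1$ and $\|u_0\|_1=2$.
\end{enumerate}
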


The historical proofs of the Hahn-Banach theorem, provided by Banach and Hahn,
proceed from the one-step version
inductively using transfinite induction~\cite{NB11}.
Nowadays, the text book proofs use Zorn's lemma instead and in both cases
the proofs are based on the axiom of choice.
In the case of separable spaces, it is well-known~\cite{NB11} that the axiom
of dependent choice is sufficient to obtain the Hahn-Banach theorem inductively
from the one-step version.
We can mimic this proof and in this way we obtain a new proof of the upper bound.

\begin{proposition}
	$\HBT_X\leqW\IVT^\infty$ for every computable normed space $X$.
\end{proposition}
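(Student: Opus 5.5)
We reduce $\HBT_X$ to the infinite product $\IVT^\infty$ by mimicking the classical dependent-choice proof for separable spaces, with Proposition~\ref{prop:HBT1-IVT} supplying each single step. An instance of $\HBT_X$ consists of a subspace $A\In X$, given by a dense sequence $(a_i)_i$, and a functional $f:A\to\IR$, which we normalise to $\|f\|\leq1$ (when $\|f\|$ is available, scale by it). Fix the computable dense sequence $(e_n)_n$ of $X$ and put $A_0:=A$ and $A_{n+1}:=A_n+\IR e_n$. We want extensions $g_n:A_n\to\IR$ with $g_0=f$, $g_{n+1}|_{A_n}=g_n$ and $\|g_n\|\leq1$. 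The union $\bigcup_n A_n$ is dense in $X$, so the $g_n$ together give a bounded linear functional on a dense subspace. This functional extends uniquely to $g:X\to\IR$ with $\|g\|\leq1$, hence $\|g\|=\|f\|$.

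The difficulty is that $\IVT^\infty$ answers all its instances in parallel, whereas the $n$-th step depends on the values chosen earlier. We remove this dependence by asking each instance for a relative quantity. By Theorem~\ref{thm:one-step}, $g_{n+1}(e_n)$ must lie in an interval $[l_n,u_n]$ with $l_n=\sup_{y\in A_n}(g_n(y)-\|e_n-y\|)$ and $u_n=\inf_{y\in A_n}(g_n(y)+\|e_n-y\|)$. These bounds are only left-c.e.\ and right-c.e.\ relative to the earlier data, which is exactly the information an $\IVT$ instance uses. We therefore encode the $n$-th choice as a parameter $t_n\in[0,1]$ and set $g_{n+1}(e_n):=l_n+t_n(u_n-l_n)$. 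The map $(t_0,\dots,t_{n-1})\mapsto(l_n,u_n)$ is computable in the sense of lower and upper semicontinuous approximation. The $n$-th $\IVT$ instance does not need the earlier $t_k$ as input, because any choice in $[0,1]$ is admissible at every stage. The point is that the interval $[l_n,u_n]$ is always nonempty, whatever the earlier admissible choices were.

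This reformulation is still circular: $l_n$ and $u_n$ depend on $t_0,\dots,t_{n-1}$, which are outputs. So feeding $\IVT^\infty$ instances $c_n$ that are independent of earlier outputs does not obviously work. My first attempt at a fix is to use the closure of $\IVT$ under compositional products restricted to this pattern. That is, I would show that $\IVT\equivW\CC$ for a suitable connected choice and that $\IVT^\infty$ absorbs a countable sequential iteration. The absorption would go by observing that the set of admissible sequences $(g_n(e_n))_n$ is a co-c.e.\ closed subset of $\prod_n[-\|e_n\|,\|e_n\|]$ relative to the input: a violation $|\sum_i\lambda_i g(e_i)+g(a)|>\|\sum_i\lambda_i e_i+a\|$ is detectable. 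If this set were a product of intervals, $\IVT^\infty$, which is equivalent to choice on products of intervals, would finish immediately. It is not a product, so I would fall back on the parametrised choice $t_n$ above together with a fixed-point-free argument, obtaining $l_n,u_n$ as limits computed from $t$'s.

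I expect the main obstacle to be exactly this step: showing that the sequential dependence can be handled within the parallel product $\IVT^\infty$. The fallback is to prove that the admissible set of parameter sequences $(t_n)$ is a co-c.e.\ closed connected-fibred set, each fibre being a nonempty interval given the previous coordinates, and that choice on such "iterated interval" sets reduces to $\IVT^\infty$ by handling each coordinate with one $\IVT$ instance whose function is defined uniformly from the $t$'s via the semicontinuous bounds. Once this is in place, the remaining verification is routine: compute $g$ from the $g_n$ via density and the bound $\|g\|\leq1$, which comes from Theorem~\ref{thm:one-step} at each step.
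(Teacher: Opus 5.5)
There is a genuine gap, and it comes from misreading $\IVT^\infty$. In this paper $f^\infty$ is not a parallel product. It is the infinite loop $\ldots\star f\star f$, defined as an inverse limit with $q_{i+1}\in\U\circ\langle\id\times f\rangle(q_i)$. So the $(i+1)$-st instance is computed, by a program carried along in $q_i$, from the $i$-th answer. The sequential dependence you call the main obstacle is exactly what the operation is built to handle, and no decoupling is needed. The paper's proof is the direct iteration:
\begin{enumerate}
\item from the current $(f_n,A_n)\in\CC_\SS(X)$ with $\|f_n\|\leq1$, form the one-step instance $(f_n,A_n,e_n)$;
\item answer it with one $\IVT$ call via Proposition~\ref{prop:HBT1-IVT};
\item pass to $f_{n+1}$ on $A_{n+1}$ with Proposition~\ref{prop:linear}, and feed this into the next round of the loop.
\end{enumerate}
The values $(f_{n+1}(e_n))_n$ then determine $g\in\CC(X)$ by Proposition~\ref{prop:linear}.

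Your proposed repair does not work. Setting $g_{n+1}(e_n):=l_n+t_n(u_n-l_n)$ needs $l_n$ and $u_n$ as reals. But they are only left-c.e.\ and right-c.e.\ in the earlier data, so $(1-t_n)l_n+t_nu_n$ is not computable from $t_n$ and those approximations. Your own remark that every $t_n\in[0,1]$ is admissible shows that the $\IVT$ calls do no work in this scheme; the whole difficulty has been moved into computing $l_n$ and $u_n$. Obtaining them ``as limits'' is not available below $\IVT^\infty$. The map from the input to $l_0$ is singlevalued into $\IR$, and as recalled in the introduction, any such problem reducible to $\WKL\equivW\IVT^\infty$ is computable. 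Yet for $X=\ell^1$ a computable $l_0$ would make $g(x):=l_0$ a computable solution of $\HBT^1_{\ell^1}\equivW\IVT$, which is impossible. The ``fixed-point-free argument'' and the reduction for ``iterated interval sets'' are never specified, so they do not close the gap either.

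You actually had a correct alternative in hand and set it aside. The set of admissible sequences $(g(e_n))_n$ is a nonempty co-c.e.\ closed subset of the computably compact space $\prod_n[-\|e_n\|,\|e_n\|]$. Choice on such sets reduces to $\WKL$ without any product-of-intervals structure. Combined with $\WKL\equivW\IVT^\infty$ from Proposition~\ref{prop:WKL-IVT}, this proves the statement; it is essentially the classical upper-bound argument. The paper's loop argument instead gets the bound directly from the one-step theorem.
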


Here $\IVT^\infty$ denotes the infinite product of $\IVT$ in the sense of~\cite{Bra25},
where it is also proved that $\WKL^\infty\equivW\WKL$ and hence
$\IVT^\infty\equivW\WKL$ follows. 
Kihara noticed that the infinite product operation corresponds to the axiom of dependent
choice in some logical setting.\footnote{Takayuki Kihara, {\em The infinite loop operation and the axiom of dependent choice}, presentation at CCA 2025, Kyoto, Japan, 24 September 2025.}

If the space $X$ under consideration is finite-dimensional, then a finite
number of loops is sufficient to obtain the Hahn-Banach theorem.
In fact, we obtain the following result.

\begin{proposition}
\label{prop:HBT-finite}
	$\HBT_X\leqW\IVT^{[n-1]}$ for every $n$--dimensional computable
	nor\-med space $X$ and $n\geq 1$.
\end{proposition}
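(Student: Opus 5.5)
We prove Proposition~\ref{prop:HBT-finite} by iterating the one-step theorem $n-1$ times, using Proposition~\ref{prop:HBT1-IVT} at each step. Here $\IVT^{[n-1]}$ denotes the $(n-1)$-fold compositional product $\IVT\star\dots\star\IVT$; for $n=1$ it is the identity, i.e.\ $\HBT_X$ is computable. The argument is a finite version of the dependent-choice proof behind $\HBT_X\leqW\IVT^\infty$. The only new ingredient is dimension bookkeeping, so that at most $n-1$ genuinely nontrivial one-step extensions occur and each intermediate stage is a valid $\HBT^1_X$ instance computable from the previous output.

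Fix a computable basis $e_1,\dots,e_n$ of $X$. An instance consists of a subspace $A$, given e.g.\ by a dense sequence or by generators, and a functional $f$ on $A$; normalize so that $\|f\|\le1$, since $\|f\|$ is computable from the name or can be absorbed by scaling. If $A=\{0\}$, we output $g=0$. Otherwise $A$ contains a nonzero vector, and we can search for one, say $a_1\in A$; a rational approximation with small error suffices since $X$ is finite-dimensional.

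The plan is then as follows. Extend $a_1$ to a basis $a_1,e_{i_2},\dots,e_{i_n}$ of $X$; choosing the $e_{i_j}$ is a finite, positive (semidecidable) condition on determinants. Set $A_0:=A$ and $A_k:=A+\mathrm{span}\{e_{i_2},\dots,e_{i_{k+1}}\}$. Then $A_{n-1}=X$, and there are exactly $n-1$ one-step extensions. At step $k$ we feed the instance $(A_{k-1},g_{k-1},e_{i_{k+1}})$ to $\HBT^1_X$. Its description is computable from the original data together with the previous outputs $g_1,\dots,g_{k-1}$: a dense sequence of $A_{k-1}$ is obtained as rational combinations of a dense sequence of $A$ and the added basis vectors, and the values of $g_{k-1}$ follow by linearity from $f$ and the chosen reals. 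Each step then produces $g_k$ with $\|g_k\|\le\|g_{k-1}\|\le1$ on $A_k$. Chaining these reductions via Proposition~\ref{prop:HBT1-IVT} yields $\HBT_X\leqW\IVT^{[n-1]}$, and the final $g_{n-1}$ is a functional on $X$ specified by its values on a basis.

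The main obstacle is that $A$ may already contain some of the $e_{i_j}$, or more generally $\dim A$ is not computable. In that case the one-step instance has $e_{i_{k+1}}\in A_{k-1}$, which the one-step theorem handles trivially: the sup and inf bounds coincide. The difficulty is that we cannot decide whether we are in this degenerate case. I would therefore avoid deciding it. We always invoke $\HBT^1$ on $(A_{k-1},g_{k-1},e_{i_{k+1}})$ with the possibly degenerate extension and rely on the $\IVT$ instance being uniformly computable regardless: the bounds $\sup_{y}(g(y)-\|x-y\|)$ and $\inf_y(g(y)+\|x-y\|)$ are computable as lower and upper reals uniformly in the dense sequence, which is all that the reduction of Proposition~\ref{prop:HBT1-IVT} needs.

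The remaining point is that the output $g$ must be a well-defined functional on $X$, for example given by its values $g(e_i)$. Since $A_{n-1}=X$ holds with possibly redundant generators, $g$ can be evaluated on $x$ by approximating $x$ through the dense sequence of $A_{n-1}$. Because $\|g\|\le1$, $g$ is uniformly continuous and these approximations converge, so $g(e_i)$ is computable. Its norm equals $\|f\|$, since $g$ extends $f$ and $\|g\|\le\|f\|$.
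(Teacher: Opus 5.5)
Your proposal is correct and follows the paper's argument. The paper obtains this proposition by running the iterated one-step construction from the proof of $\HBT_X\leqW\IVT^\infty$ only finitely often, reducing each step to $\IVT$ via Proposition~\ref{prop:HBT1-IVT} and composing, and your treatment of degenerate steps with $e_{i_{k+1}}\in A_{k-1}$ matches the paper's remark that $x\not\in A$ is not required for $\HBT^1_X$.

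What you add is the bookkeeping the paper leaves implicit for getting $n-1$ rather than $n$ steps: a nonzero $a_1\in A$, completed by $n-1$ basis vectors so that $A_{n-1}=X$. Such an $a_1$ always exists because $\|f\|>0$, so your case split on $A=\{0\}$, which could not be decided anyway, is simply vacuous. Note also that the initial normalization is not needed: using $\HBT^1_X$ as a black box, the chain $\|g_k\|\le\|g_{k-1}\|$ already gives $\|g_{n-1}\|\le\|f\|$, which spares you the claim that $\|f\|$ is computable from the name.
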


Here $\IVT^{[n]}$ denotes the $n$--fold compositional product of $\IVT$.
We note that $\IVT\lW\IVT^{[2]}$~\cite[Theorem~9.3]{BLRMP19}.
The intermediate value theorem $\IVT$ is known to have computable solutions
for computable instances.
This gives us immediately a non-uniform computable version of the
Hahn-Banach theorem as a corollary.

\begin{corollary}[Metakides and Nerode 1982]
\label{cor:MN82}
Let $X$ be a finite-dimensional computable normed space with a
computably separable subspace $A\In X$. Then every computable linear
functional $f:A\to\IR$ has a computable linear extension $g:X\to\IR$
with $\|g\|=\|f\|$.
\end{corollary}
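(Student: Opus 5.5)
The corollary follows non-uniformly from Proposition~\ref{prop:HBT-finite}, once we use that $\IVT$, and hence its finite compositional products, maps computable instances to at least one computable solution. Let $X$ have dimension $n\geq1$. By Proposition~\ref{prop:HBT-finite} we have $\HBT_X\leqW\IVT^{[n-1]}$ (for $n=1$ this says that $\HBT_X$ is computable). The given data, namely the computably separable subspace $A$ (via a computable dense sequence) and the computable functional $f$, form a computable name of an instance of $\HBT_X$. The forward functional $H$ of the reduction is computable, so it maps this name to a computable instance of $\IVT^{[n-1]}$. Applying the backward functional $K$ to the input together with any computable solution yields a computable name of a valid extension $g$ with $\|g\|=\|f\|$. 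So the task reduces to showing that $\IVT^{[n-1]}$ has computable solutions on computable instances.

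I would prove this by induction on the number of compositional factors. The base fact is that every computable instance of $\IVT$ has a computable solution. This is the classical non-uniform argument: for a computable $h:[0,1]\to\IR$ with $h(0)<0<h(1)$, either $h$ vanishes on an interval, and then it has a rational zero, or bisection with rational trial points that avoid zeros works computably. For the inductive step, recall that $F\star G$ is equivalent to a problem that takes an input $x$, produces a solution $y$ of $G(x)$, and then feeds a computable transformation of $(x,y)$ into $F$. If $x$ is computable and $G$ has computable solutions on computable inputs, then $y$ can be chosen computable. The next instance of $F$ is then computable, so by the induction hypothesis it has a computable solution, and the total output is computable.

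The main subtlety is the definition of $\IVT^{[n-1]}$ through the compositional product $\star$, which is defined as a maximum over reductions. I would handle it through a concrete representative: a finite iteration of ``apply $\IVT$ to a computable function of the input and all previous answers.'' Proposition~\ref{prop:HBT-finite} provides a reduction to some such representative. Alternatively, I would argue directly that the class of problems with computable solutions on computable instances is downward closed under $\leqW$ and closed under composition, and that the compositional product is Weihrauch equivalent to a composition of such problems. This step carries the only real content, and it is standard.
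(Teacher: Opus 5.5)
Your proposal is correct and matches the paper's route: combine $\HBT_X\leqW\IVT^{[n-1]}$ from Proposition~\ref{prop:HBT-finite} with the fact that $\IVT$, and hence each finite compositional product $\IVT^{[k]}$, maps computable instances to some computable solution. The paper only cites this fact as known, whereas you spell out the bisection argument and the induction; that induction is immediate here, since the paper defines $f\star g$ concretely as $\langle\id\times f\rangle\circ\U\circ\langle\id\times g\rangle$ with $\U$ computable. The only points left implicit, in the paper as well, are that the case $\|f\|=0$ lies outside $\dom(\HBT_X)$ but is handled by $g=0$, and that $\|g\|\leq\|f\|$ becomes equality because $g$ extends $f$.
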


Finally, we note that the upper bound given in Proposition~\ref{prop:HBT-finite}
is not tight. In fact, we prove the following result 
in Section~\ref{sec:ell1-2} for the two-dimensional case.

\begin{theorem}
	$\HBT_{\ell^1_2}\equivW\LLPO$.
\end{theorem}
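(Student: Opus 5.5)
The proof has two reductions. For the upper bound I reduce everything to a single sign choice, using the fact that the dual of $\ell^1_2$ is $\ell^\infty_2$, whose unit ball is a square. For the lower bound I encode an instance of $\LLPO$ as a one-dimensional subspace whose norm preserving extension is forced to reveal a sign.

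\emph{Lower bound $\LLPO\leqW\HBT_{\ell^1_2}$.} I use $\LLPO$ in the form: given a real $x$, output some $s\in\{\le,\ge\}$ with $x\,s\,0$. Given $x$, let $A:=\IR v$ with $v:=(1,x)$; it is computably separable uniformly in $x$. Define $f(\lambda v):=\lambda(1+|x|)$. Since $\|v\|_1=1+|x|$, we get $\|f\|=1$, and $f$ is computable from $x$ without knowing the sign of $x$. Any extension $g=(g_0,g_1)$ with $\|g\|_\infty=1$ must satisfy $g_0+g_1x=1+|x|$ with $|g_0|,|g_1|\le1$. This forces $g_0=1$ and $g_1x=|x|$. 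Hence $g_1=1$ if $x>0$, $g_1=-1$ if $x<0$, and $g_1\in[-1,1]$ is arbitrary if $x=0$. From $g_1$ I semi-decide $g_1>-\frac12$ and $g_1<\frac12$ in parallel; at least one of the two holds. If $g_1<\frac12$ is confirmed, then $x\le0$; if $g_1>-\frac12$ is confirmed, then $x\ge0$.

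\emph{Upper bound $\HBT_{\ell^1_2}\leqW\LLPO$.} First I treat the main case, where $A$ is a line spanned by $a=(a_0,a_1)$ and $c:=f(a)$. The quantity $t:=a_0a_1$ does not change under $a\mapsto\lambda a$ except by the positive factor $\lambda^2$, so its sign is an invariant of the line. I apply $\LLPO$ once to $t$.
\begin{itemize}
\item If the answer is $t\ge0$, then $a_0,a_1$ have weakly equal sign, so $a_0+a_1\neq0$. I output $g:=\frac{c}{a_0+a_1}(1,1)$. This satisfies $g(a)=c$ and $\|g\|_\infty=|c|/(|a_0|+|a_1|)=\|f\|$.
\item If the answer is $t\le0$, I output $g:=\frac{c}{a_0-a_1}(1,-1)$ symmetrically.
\end{itemize}
Both formulas are independent of the choice of $a$ on the line. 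They are also continuous wherever the chosen alternative is valid, since then the denominator is bounded away from $0$ relative to $\|a\|$. The degenerate cases $A=\ell^1_2$ and $A=\{0\}$ must be covered by the same procedure. If $A=\ell^1_2$, then $f$ itself is the output. If $A=\{0\}$, then $g=0$.

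\emph{Main obstacle.} The real difficulty is uniformity. From a name of $A$ (a dense sequence) and of $f$, one cannot decide which of the three cases holds, nor find a spanning vector $a$. My plan is as follows.
\begin{itemize}
\item Compute $\|f\|$ as an upper-semicomputable quantity, and use that a small norm forces the output to be small. Then the output $0$ can be approximated consistently until a vector $a$ with $|f(a)|$ bounded away from $0$ appears.
\item Once such a vector appears, fix it and pose the $\LLPO$ question about $a_0a_1$.
\item Since $\LLPO$ may be used only once, ask the question in advance on a sign-invariant quantity built from the enumeration. A candidate is a limit of $a_0a_1/\|a\|^2$ weighted by $|f(a)|$, which is $0$ when $f$ vanishes.
\item If two linearly independent vectors appear in $A$, switch to outputting $f$ extended by linearity. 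This is compatible with the answer given, because then $A=\ell^1_2$ and the extension is unique.
\end{itemize}
Making this weighted invariant computable and checking that the post-processing is continuous across all cases is the step I expect to require the most care.
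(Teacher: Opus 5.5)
Your lower bound is the paper's construction (due to Ishihara). Your treatment of a one-dimensional $A=\IR a$ is correct and is essentially the paper's upper-bound argument. The paper normalizes to some $x\in A$ with $\|x\|_1=1$, $x_0>0$, $f(x)=1$ and lets $\LLPO$ decide the sign of $x_1$. Your scale-invariant sign of $a_0a_1$, with outputs $\frac{c}{a_0\pm a_1}(1,\pm1)$, avoids that normalization and the case split over coordinates. Two small points. First, $A=\{0\}$ is excluded by the domain condition $\|f\|>0$. Second, for the same reason a spanning vector $a=x_n$ with $f(x_n)\neq0$ can simply be found by search before $K$ writes down the $\LLPO$ instance, so nothing has to be asked ``in advance''.

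The genuine gap is your ``main obstacle'' part, and it cannot be carried out. First, from a $\CC_\SS$-name the norm $\|f\|=\sup_{x_n\neq0}|f(x_n)|/\|x_n\|_1$ is only lower semicomputable, not upper semicomputable. Second, the switching step is not continuous: uniqueness of the extension when $A=\ell^1_2$ does not make $f$ compatible with the approximations you have already written.

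Here is a concrete failure. Let $\varepsilon>0$, $L=\IR(1,\varepsilon)$, and let $q$ realize $F=(1+\varepsilon,0)$ on all of $\IR^2$. Then $\langle q,p\rangle$ names $(F|_L,L)$ with $\|F|_L\|=1$, whose only solution is $(1,1)$. Here $a_0a_1>0$, so $\LLPO$ must answer ``$\geq$'' and you start printing approximations of $(1,1)$. Changing $p$ after its first $N$ entries gives names $\langle q,p_N\rangle\to\langle q,p\rangle$ of $(F,\IR^2)$, whose only solution is $(1+\varepsilon,0)$.

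In fact no reduction can cope with this. $\LLPO$ has only two answers and a closed solution relation, so some answer valid at $K\langle q,p\rangle$ is valid at infinitely many $K\langle q,p_N\rangle$. Continuity of $H$ then forces the output $(1+\varepsilon,0)$ at the limit. Supplying $\|f\|$ as extra input would not help either. A suitable realizer of $(1,0)$ on the $x$-axis is a limit of realizers of $(1,y)$ on $\IR^2$ for every $y\in[-1,1]$, while $\LLPO$ offers only two answers.

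So the statement has to be read with $A$ a proper, hence one-dimensional, subspace. This is what the paper's proof tacitly does when it divides by $\|f\|$ and picks $x\in A$ with $f(x)=1$. Under that reading your main case, preceded by the search for $a$, is already a complete proof, and the weighted-invariant machinery should be dropped.
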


The diagram in Figure~\ref{fig:HBT} shows the relevant Weihrauch degrees
that we are going to study.

\begin{figure}[htb]
	\begin{center}
		\begin{tikzpicture}[scale=0.9,auto=left,thick,every node/.style={fill=blue!20}]
            \node (LLPO) at (0,0) {$\HBT_{\ell_2^1}\equivW\HBT_{\ell_2^\infty}\equivW\LLPO$};
            \node (IVT) at (0,1) {$\HBT_{\ell^1}^1\equivW\IVT\equivW\C\C_{[0,1]}$};
            \node (WKL) at (0,2) {$\HBT_{\ell^1}\equivW\WKL\equivW\IVT^\infty$};    
			\draw[->] (WKL) edge (IVT);
			\draw[->] (IVT) edge (LLPO);			
		\end{tikzpicture}
		\caption{The Hahn-Banach theorem in the Weihrauch lattice.}
		\label{fig:HBT}
	\end{center}
\end{figure}
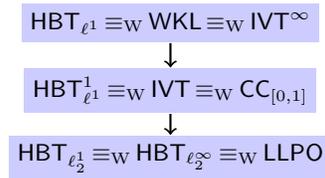

\section{Weihrauch Complexity and the Hahn-Banach Theorem}
\label{sec:Weihrauch}

We introduce concepts from computable analysis and Weihrauch complexity that we are
going to use and we refer the reader to \cite{BH21,Wei00} for more details.
We recall that a {\em representation} of a space $X$ is a surjective partial map $\delta_X:\In\IN^\IN\to X$.
In this case $(X,\delta_X)$ is called a {\em represented space}. If we have two represented spaces
$(X,\delta_X)$ and $(Y,\delta_Y)$, then we automatically have a representation $\delta_{\CC(X,Y)}$ of the space
of functions $f:X\to Y$ that have a continuous realizer. A partial function $F:\In\IN^\IN\to\IN^\IN$
is called a {\em realizer} of some partial multivalued function $f:\In X\mto Y$, if
\[\delta_YF(p)\in f\delta_X(p)\]
for all $p\in\dom(f\delta_X)$. In this situation we also write $F\vdash f$.
It is well-known that there are universal functions $\U:\In\IN^\IN\to\IN^\IN$ such that
for every continuous $F:\In\IN^\IN\to\IN^\IN$ there is some $q\in\IN^\IN$ such that
$F(p)=\U\langle q,p\rangle$ for all $p\in\dom(F)$. Here $\langle\cdot\rangle$ denotes some
standard pairing function on Baire space $\IN^\IN$ (we use this notation for pairs as well as for the pairing of sequences).
For short we write $\U_q(p):=\U\langle q,p\rangle$ for all $q,p\in\IN^\IN$.
Now we obtain a representation
$\delta_{\CC(X,Y)}$ of the set $\CC(X,Y)$ of total singlevalued functions $f:X\to Y$ with continuous realizers by
\[\delta_{\CC(X,Y)}(q)=f:\iff\U_q\vdash f.\]
It is well-known that for admissibly represented $T_0$--spaces $X,Y$ the function space $\CC(X,Y)$ consists
exactly of the usual continuous functions (see \cite{BH21,Wei00} for more details).
We need a representation of function spaces $\CC(A,Y)$ for
varying domains $A$. We use coproducts of the following form for this purpose.

\begin{definition}[Coproduct function spaces]
\rm
Let $X,Y$ be represented spaces and let
$(\PP(X),\delta_\PP)$ be a represented space with $\PP(X)\In 2^X$.
Then 
\begin{eqnarray*}
&\CC_\PP(X,Y):=\bigsqcup_{A\in\PP(X)}\CC(A,Y):=\{(f,A):A\in\PP(X)\mbox{ and }f\in\CC(A,Y)\}\
\end{eqnarray*}
denotes the {\em coproduct function space} that we represent by $\delta_{\CC_\PP}$,
defined by
\[\delta_{\CC_\PP}\langle q,p\rangle=(f,A):\iff \delta_\PP(p)=A\mbox{ and }\U_q\vdash f\]
for all total single-valued functions $f:A\to Y$ in $\CC(A,Y)$ with $A\in\PP(X)$.
\end{definition}

If $Y=\IR$, then we write for short $\CC_\PP(X):=\CC_\PP(X,\IR)$.
We will use for $\PP(X)$ for certain spaces of closed subsets of 
a computable metric space $X$. We recall that a {\em computable metric space}
$X$ is a metric space $(X,d)$ together with a dense sequence $s:\IN\to X$
such that $d\circ(s\times s)$ is computable. 
By $\SS(X)$ we denote the space of non-empty closed subsets $A\In X$
represented via a sequence $(x_n)_{n\in\IN}$ such that 
\[A=\overline{\{x_n:n\in\IN\}}.\]
By $\LL(X)$ we denote the space of non-empty closed subsets $A\In X$
represented via their {\em distance functions}
\[d_A:X\to\IR,x\mapsto\inf_{y\in A}d(x,y).\]
In the first case the name of a set $A$ is a name for a point in $X^\IN$,
in the second case a name for a function in $\CC(X)=\CC(X,\IR)$.
The computable sets $A\in\SS(X)$ are called {\em computably separable}
and the computable sets $A\in\LL(X)$ are called {\em located}~\cite{BP03}.
It follows from~\cite[Theorems~3.8, 3.9]{BP03} that the representation of 
$\LL(X)$ contains significantly more information on closed subspaces than that
of $\SS(X)$. In particular, the identity $\id:\LL(X)\to\SS(X)$ is computable
for every computable metric space, but typically the inverse is not 
computable (not even for $X=\IR$). We write $\CC_\SS$ or $\CC_\LL$ instead of $\CC_\PP$
if we use $\PP(X)=\SS(X)$ or $\PP(X)=\LL(X)$, respectively.

We recall that a {\em computable normed space} $X$ is a normed
space $(X,\|\cdot\|)$ together with a {\em fundamental sequence}
$e:\IN\to X$ (i.e., a sequence whose linear span is dense) 
such that the induced metric space is a computable metric space.
This metric space is defined via some standard enumeration $s:\IN\to X$
of the rational linear combinations of $e$. 
We only consider normed spaces over the field of real numbers $\IR$.

It is well-known that linear maps on separable normed spaces can be represented by their
values on a fundamental sequence together with a bound (see, e.g., \cite[Theorem~4.3]{Bra03c}).

\begin{proposition}[Linear bounded functionals]
\label{prop:linear}
Let $X$ be a computable nor\-med space with fundamental sequence $e:\IN\to X$.
The multivalued function
\[F:\In\CC(X)\mto\IR^\IN\times\IN,f\mapsto\{((f(e_n))_{n\in\IN},M):\|f\|\leq M\},\]
defined on all linear bounded functionals $f$, is computable and has a computable left inverse.
\end{proposition}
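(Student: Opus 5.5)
We prove the two directions separately: first that $F$ is computable, then that it has a computable left inverse $G:\In\IR^\IN\times\IN\to\CC(X)$. Here $G$ maps $((f(e_n))_n,M)$ back to $f$.

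For computability of $F$, we are given a name $q$ of $f\in\CC(X)$, so $\U_q$ realizes $f$. The values $f(e_n)$ are then computable uniformly in $n$: we compute names of the points $e_n$ (each $e_n$ is itself one of the rational linear combinations $s_k$, with $k$ computable from $n$) and apply $\U_q$.

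The bound $M$ is the delicate part, since $\|f\|$ is in general not computable from a name of $f$. We only need some upper bound, and we may choose it nondeterministically, which is why $F$ is multivalued. We use continuity of $f$ at $0$: since $f(0)=0$, there is a rational $\delta>0$ with $|f(x)|<1$ whenever $\|x\|\le\delta$, and then $\|f\|\le 1/\delta$. We find such a $\delta$ effectively from the realizer. Simulate $\U_q$ on a name of $0\in X$, for instance the constant sequence naming $s_{k_0}=0$. After finitely many steps the realizer outputs an approximation of $f(0)$ with precision $1/4$, say a value $r$ with $|r|\le 1/4$. This computation has read only a finite prefix of the input, and that prefix is compatible with all names of points in some ball $B(0,2^{-m})$ under the standard Cauchy representation. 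Hence every $x$ with $\|x\|<2^{-m-1}$ has a name extending this prefix, so $|f(x)-r|\le 1/4$ and thus $|f(x)|\le 1/2$. By linearity $\|f\|\le 2^{m+2}$, and we output $M:=2^{m+2}$. This continuity-and-finite-prefix argument is the main obstacle. The care needed is in choosing the representation so that the prefix genuinely determines a neighbourhood; this works for the Cauchy representation with fast converging sequences, where a name is read up to index $j$ and then extended by a tail converging within $2^{-j}$.

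For the left inverse, given $(a_n)_n$ and $M$ with $a_n=f(e_n)$ and $\|f\|\le M$, we compute $f$ on any $x\in X$ as follows. By linearity, $f(s_k)$ is computable from $k$ and the $a_n$, since $s_k$ is an explicit rational combination $\sum_n \lambda_n e_n$. For a general $x$ with Cauchy name $(s_{k_j})_j$, $\|x-s_{k_j}\|\le 2^{-j}$, we have
\[|f(x)-f(s_{k_j})|\le M2^{-j}.\]
So $(f(s_{k_j}))_j$ is a sequence of approximations with a computable modulus, and its limit $f(x)$ is computable. This gives a computable realizer of $f$ uniformly in $((a_n)_n,M)$, hence a name in $\CC(X)$ by the smn-property of $\U$. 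Finally $G\circ F=\id$ on linear bounded functionals, because a continuous linear $f$ is determined by its values on the fundamental sequence, whose span is dense.
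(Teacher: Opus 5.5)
Your proof is correct and is the standard argument for this fact. You read off the $f(e_n)$ by evaluation, extract a norm bound $M$ from the finite input prefix that the realizer consults on a name of $0$ (this is where the multivaluedness of $F$ comes from), and reconstruct $f$ from $((f(e_n))_n,M)$ by linearity on the rational combinations $s_k$ together with the modulus $|f(x)-f(s_{k_j})|\le M2^{-j}$.

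The paper gives no proof of its own here; it only refers to \cite[Theorem~4.3]{Bra03c} for this well-known fact, so there is no separate route to compare against.
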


We can now define the {\em Hahn-Banach problem} 
$\HBT_X$ and the {\em one-step Hahn-Banach problem} $\HBT_X^1$.

\begin{definition}[Hahn-Banach problem]
\rm
Let $X$ be a computable normed space. Then we define:
\begin{enumerate}
\item $\HBT_X:\In\CC_\SS(X)\mto\CC(X)$, where $\HBT_X(f,A)$ is the set
\[\{g\mid\text{$g:X\to\IR$ is a linear extension of $f$ with $\|g\|\leq\|f\|$}\}\]
with $\dom(\HBT_X):=\{(f,A)\mid\text{$f:A\to\IR$ linear with $0<\|f\|<\infty$}\}$.\vspace{0.2cm}
\item $\HBT_X^1:\In\CC_\SS(X)\times X\mto\CC_\SS(X)$, where $\HBT_X^1(f,A,x)$ is the set
\[\{(g,A\!+\!\IR x)\mid\text{$g:A\!+\!\IR x\to\IR$ is a linear extension of $f$ with
	$\|g\|\leq\|f\|$}\}\]
with $\dom(\HBT_X^1):=\{(f,A,x)\mid\text{$f:A\to\IR$ linear with $0<\|f\|<\infty$}\}$.
\end{enumerate}
\end{definition}

We note that we do not require $x\not\in A$ for $\HBT_X^1$.
We use the usual concept of Weihrauch reducibility (see \cite{BGP21} for a survey) in order to compare problems. In general a {\em problem} is a multivalued function $f:\In X\mto Y$ on represented spaces $X,Y$. 
Here $\id:\IN^\IN\to\IN^\IN$ denotes the identity on Baire space.

\begin{definition}[Weihrauch reducibility]
	\rm
	Let $f:\In X\mto Y$ and $g:\In Z\mto W$ be problems. We say that
$f$ is {\em Weihrauch reducible} to $g$, in symbols $f\leqW g$, if there are computable
		$H,K:\In\IN^\IN\to\IN^\IN$ such that $H\langle\id,GK\rangle\vdash f$, whenever $G\vdash g$ holds. 
\end{definition}

As usual, we denote the corresponding equivalence by $\equivW$. 
We will also need a number of well-known benchmark problems that we
will use to characterize the above Hahn-Banach problems.
The problems in the following definition are known 
as {\em Weak K\H{o}nig's lemma} ($\WKL$), 
as {\em separation problem} ($\SEP$),
as {\em intermediate value theorem} ($\IVT$),
as {\em connected choice problem} ($\C\C_{[0,1]}$) of $[0,1]$
and as {\em lesser limited principle of omniscience}.
We write $\Tr$ for the set of {\em binary trees} $T\In\{0,1\}^*$ represented
via their characteristic functions. By $\range(f)=\{f(x):x\in X\}$
we denote the {\em range} of a function $f:X\to Y$.

\begin{definition}[Benchmark problems]
\rm
We define the following problems:
\begin{enumerate}
\itemsep 0.2cm
	\item $\WKL:\In\Tr\mto2^\IN,T\mapsto[T]$, where $\dom(\WKL)$ is the set of all infinite binary trees and $[T]$ denotes the set of infinite paths of $T$.
	\item $\SEP:\In\IN^\IN\times\IN^\IN\mto2^\IN,(p,q)\mapsto\{A:(\forall n)\;\range(p)\In A\In\IN\setminus\range(q)\}$, where
	$\dom(\SEP):=\{(p,q):\range(p)\cap\range(q)=\varnothing\}$.
	\item $\IVT:\In\CC[0,1]\mto[0,1],f\mapsto f^{-1}\{0\}$ 
          with $\dom(\IVT)=\{f:f(0)\cdot f(1)<0\}$.
	\item $\C\C_{[0,1]}:\In[0,1]^\IN\times[0,1]^\IN\mto[0,1],((a_n),(b_n))\mapsto\{x:(\forall n)\;a_n\leq x\leq b_n\}$\\ 
	with $\dom(\C\C_{[0,1]})=\{((a_n),(b_n)):(\forall n)\;a_n\leq a_{n+1}\leq b_{n+1}\leq b_n\}$.
	\item $\LLPO:\In2^\IN\times2^\IN\mto\{0,1\},(p_0,p_1)\mapsto\{i\in\{0,1\}:p_i=\widehat{0}\}$\\
	with $\dom(\LLPO):=\{(p_0,p_1):(\exists i\in\{0,1\})\;p_i=\widehat{0}\}$.
\end{enumerate}
\end{definition}

Here $\widehat{0}\in 2^\IN$ denotes the constant zero sequence.
The following equivalences are well-known.
The equivalence of Weak K\H{o}nig's lemma and the separation 
problem was proved by Gherardi and Marcone~\cite{GM09},
the equivalence of connected choice and the intermediate value
theorem is due to Gherardi and the first author~\cite[Proposition~3.6, Theorem~6.2]{BG11a}.
More results on connected choice can be found in~\cite{BLRMP19}.
The proof that Weak K\H{o}nig's lemma is closed under infinite
loops can be found in~\cite{Bra25}, as well as all other required results for infinite loops.

\begin{proposition}
	\label{prop:WKL-IVT}
	$\WKL^\infty\equivW\WKL\equivW\SEP\equivW\IVT^\infty$
	and $\IVT\equivW\C\C_{[0,1]}$. 
\end{proposition}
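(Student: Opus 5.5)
The plan is to assemble the statement from known results and use the monotonicity of the infinite loop operation $f\mapsto f^\infty$ from~\cite{Bra25} to close the gaps. Nothing here is new; the only step that needs an argument is the equivalence with $\IVT^\infty$.

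\textbf{Known equivalences.} The equivalence $\WKL\equivW\SEP$ is due to Gherardi and Marcone~\cite{GM09}. The idea is that a separating set for two disjoint enumerated sets is exactly a path through a computably constructed tree that remains infinite. Conversely, any infinite binary tree can be coded as a separation instance, with the standard trick of extending the tree when necessary. The equivalence $\IVT\equivW\C\C_{[0,1]}$ is~\cite[Proposition~3.6, Theorem~6.2]{BG11a}. In one direction, the nested intervals of a connected choice instance are turned into a continuous function that has a zero exactly in $\bigcap_n[a_n,b_n]$, with sign changes at the endpoints. In the other direction, the zero set of a function with $f(0)f(1)<0$ contains a connected component. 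We can shrink to it using the co-c.e.\ closed sets $\{x: f(x)\leq 0\}$ and $\{x: f(x)\geq 0\}$, in the form used there. Finally, $\WKL^\infty\equivW\WKL$ is the closure result of~\cite{Bra25}.

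\textbf{Upper bound for $\IVT^\infty$.} We have $\IVT\leqW\WKL$, for instance via $\C\C_{[0,1]}\leqW\C_{[0,1]}\equivW\WKL$. Monotonicity of $(\cdot)^\infty$ then gives
\[
\IVT^\infty\leqW\WKL^\infty\equivW\WKL .
\]

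\textbf{Lower bound for $\IVT^\infty$.} We start from $\LLPO\leqW\IVT$. Given $(p_0,p_1)$, build the piecewise linear function that is negative at $0$, positive at $1$, and whose zero is pushed into $[0,\frac13]$ or $[\frac23,1]$ once a $1$ appears in $p_1$ or $p_0$, respectively. From any zero we read off a correct index, which may be discontinuous on the middle third but can be handled by the standard overlapping-interval argument. It is well known that $\WKL\equivW\widehat{\LLPO}$. Since the infinite loop majorizes parallelization ($\widehat f\leqW f^\infty$ by~\cite{Bra25}), we obtain
\[
\WKL\equivW\widehat{\LLPO}\leqW\LLPO^\infty\leqW\IVT^\infty .
\]

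\textbf{Main obstacle.} The only delicate point is checking that the facts about $(\cdot)^\infty$ used here are exactly those proved in~\cite{Bra25}: monotonicity, $\widehat f\leqW f^\infty$, and $\WKL^\infty\equivW\WKL$. I expect all three to be available there. If not, I would reprove $\WKL^\infty\leqW\WKL$ directly. The approach would be to compute a single infinite tree whose paths code a whole sequence of $\WKL$-paths, where each instance is computed from the earlier paths. This is possible because the tree of consistent finite attempts is computable from the loop data, and compactness of Cantor space keeps it infinite.
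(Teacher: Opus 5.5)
Your proposal is correct and matches the paper, which likewise treats this proposition as a compilation of known results: it cites \cite{GM09} for $\WKL\equivW\SEP$, \cite{BG11a} for $\IVT\equivW\C\C_{[0,1]}$, and \cite{Bra25} for $\WKL^\infty\equivW\WKL$ and the other facts about infinite loops. Your explicit derivation of $\IVT^\infty\equivW\WKL$ is exactly the step the paper leaves implicit when it says this equivalence follows from $\WKL^\infty\equivW\WKL$: the upper bound comes from monotonicity of $(\cdot)^\infty$ together with $\IVT\leqW\WKL$, and the lower bound from $\WKL\equivW\widehat{\LLPO}\leqW\LLPO^\infty\leqW\IVT^\infty$.
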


We now recall the definition of {\em infinite loops}, which 
were introduced in~\cite{Bra25}. Intuitively, $f^\infty=...\star f\star f$ can be
seen as an infinite compositional product. The {\em compositional product} of two problems $f,g$ on Baire space can
be defined by
$f\star g:=\langle\id\times f\rangle\circ\U\circ\langle\id\times g\rangle$.
By $f^{[n]}$ we denote the $n$--fold compositional product of $f$ with itself.
The infinite loop is an inverse limit construction based on this product.
Here $f(A)=\bigcup_{x\in A}f(x)$ for $f:\In X\mto Y$ and $A\In X$.

\begin{definition}[Infinite loop]
\rm
Let $f:\In\IN^\IN\mto\IN^\IN$ be a problem. Then we define
the {\em inverse limit} $f^\infty:\In\IN^\IN\mto\IN^\IN$ of $f$
by
\[f^\infty(q_0):=\{\langle q_0,q_1,q_2,...\rangle\in\IN^\IN:(\forall i)\;q_{i+1}\in \U\circ\langle \id\times f\rangle(q_i)\}\]
where $\dom(f^\infty)$ consists of all $q_0\in\IN^\IN$ such that
$A_0:=\{q_0\}\In\dom(\U\circ\langle\id\times f\rangle)$ and $A_{i+1}:=\U\circ\langle \id\times f\rangle(A_i)\In\dom(\U\circ\langle\id\times f\rangle)$ for all $i\in\IN$.
\end{definition}

This definition can be extended from problems on Baire space
to problems on arbitrary represented spaces using standard techniques.
It has also been proved in~\cite{Bra25} that $f\mapsto f^\infty$ is a monotone operation
with respect to (strong) Weihrauch reducibility.
Now we are well prepared to prove our main results.

\section{The One-Step Hahn-Banach Theorem}
\label{sec:one-step}

The way we have defined $\C\C_{[0,1]}$ makes
Proposition~\ref{prop:HBT1-IVT} a direct corollary
of Proposition~\ref{prop:WKL-IVT} and Theorem~\ref{thm:one-step}.
That is, the one-step Hahn-Banach theorem is reducible to the
intermediate value theorem for every computable normed space $X$.

\begin{proposition}
\label{prop:HBT1-CC}
$\HBT_X^1\leqW\C\C_{[0,1]}$ for every computable normed space $X$.
\end{proposition}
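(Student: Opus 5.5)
We reduce $\HBT_X^1$ to $\C\C_{[0,1]}$ by computing, from a name of $(f,A,x)$, two sequences that converge monotonically to the bounds of Theorem~\ref{thm:one-step}. We then read off $g$ from any point between them.

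\textbf{Normalisation.} From a name of $(f,A)$ in $\CC_\SS(X)$ we get a dense sequence $(y_n)_n$ in $A$ and a realizer of $f$. We also need the norm $\|f\|$. Since $A$ is the closure of the span of a computable dense sequence, $\|f\|=\sup_n |f(y_n)|/\|y_n\|$ over $y_n\neq0$ is only lower semicomputable. So we do not normalise by $\|f\|$ directly. Instead we work with $c:=\|f\|$ as part of the $\C\C$ instance, as follows.

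A linear extension $g$ with $\|g\|\leq c$ is determined by $t:=g(x)$. Since $f(y)-c\|x-y\|$ is continuous in $y$ and $A$ is the closure of $\{y_n\}$, the condition becomes
\[
\sup_n\bigl(f(y_n)-c\|x-y_n\|\bigr)\leq t\leq \inf_n\bigl(f(y_n)+c\|x-y_n\|\bigr).
\]
Replacing $c$ by the lower approximations $c_k:=\max_{n\leq k}|f(y_n)|/\|y_n\|$ (with care if all $y_n=0$ so far) gives intervals. For finitely many $y_n$ and norm $c_k$ these are still compatible, by Theorem~\ref{thm:one-step} applied to $\mathrm{span}\{y_0,\dots,y_k\}$ with norm $c_k$. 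The catch is that they are not nested, since the interval shrinks in $n$ but grows in $c$. To avoid this, set
\[
a_k:=\max_{n\leq k}\bigl(f(y_n)-c_k\|x-y_n\|\bigr),\qquad b_k:=\min_{n\leq k}\bigl(f(y_n)+c_k\|x-y_n\|\bigr).
\]

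\textbf{Main obstacle: nesting.} We need $a_k$ increasing and $b_k$ decreasing, but increasing $c_k$ lowers $a_k$. I would first try to absorb this by scaling. Each $y_n$ contributes the constraint $|f(y_n)-t|\leq c\|x-y_n\|$. Dividing by $c$ and working with $s:=t/c$ and $\tilde f:=f/c$ does not fix it either, since $\tilde f$ changes too.

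The cleaner fix is to use the exact scale-invariant constraints $\lambda\in[0,1]$ on the unit ball. Alternatively, observe that $\C\C_{[0,1]}$ is equivalent to the problem of finding a point in a nonempty interval $[a,b]$ with $a$ lower semicomputable (a $\sup$) and $b$ upper semicomputable (an $\inf$). Here $a=\sup_n(f(y_n)-c\|x-y_n\|)$, where each term is, I claim, lower semicomputable. The claim needs $c$ upper semicomputable, which it is not: $c$ is only lower.

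So the actual fix is to compute $\|f\|$ exactly. This is possible: the realizer gives a bound $M$ via Proposition~\ref{prop:linear} (which applies, since $A$ is a computable normed space with fundamental sequence $(y_n)$). More importantly, $\|f\|=\sup_n|f(y_n)|/\|y_n\|$ combined with continuity still gives only lower semicomputability. I would try to obtain the upper bound from the name of $f$ as a continuous function: the modulus of continuity at $0$ yields upper bounds, since $|f(y)|\leq\varepsilon$ for $\|y\|\leq\delta$ gives $\|f\|\leq\varepsilon/\delta$, and these converge to $\|f\|$ by linearity. With $c$ computable, $a$ is a supremum of computable reals and $b$ an infimum, so $a_k$ and $b_k$ above become nested.

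\textbf{Concluding the reduction.} Rescale the nested intervals into $[0,1]$ using the bound $|t|\leq c\|x\|$ and feed them to $\C\C_{[0,1]}$. From the returned $t$, output $(g,A+\IR x)$ with $g(y+\lambda x):=f(y)+\lambda t$. Its names are computable: the dense sequence of $A+\IR x$ consists of the $y_n+qx$ with $q\in\IQ$, and evaluation of $g$ is continuous since $\|g\|\leq c$, which gives $g$ via Proposition~\ref{prop:linear}. This uses Theorem~\ref{thm:one-step} for correctness.
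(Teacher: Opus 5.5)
\textbf{The gap.} Your proposal breaks at the step where you compute $\|f\|$ exactly from the modulus of continuity of $f$ at $0$. A certified pair ($|f(y)|\le\varepsilon$ for all $y\in A$ with $\|y\|\le\delta$) does give a valid bound $\|f\|\le\varepsilon/\delta$. But $\delta$ is only what the given realizer happens to certify through the prefix it reads, and linearity does not force these bounds to converge to $\|f\|$.

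In fact $\|f\|$ is not even upper semicomputable from a name of $(f,A)\in\CC_\SS(X)$. On $X=A=\ell^1$, let $f_p$ be the functional with $f_p(e_0)=\frac12$, $f_p(e_{n+1})=1$ if $n$ is the least index with $p(n)=1$, and $f_p(e_{n+1})=0$ otherwise. It is computable uniformly in $p\in2^\IN$: evaluate on finitely supported rational approximations, since all coefficients are bounded by $1$. Yet $\|f_p\|=\frac12$ if $p=\widehat{0}$ and $\|f_p\|=1$ otherwise. Upper bounds converging to $\|f_p\|$ would therefore let you semi-decide $p=\widehat{0}$, which is impossible. Your concluding paragraph is fine once a computable constant $c$ is available, but there is no such $c$ for it to use.

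\textbf{Comparison with the paper.} The paper never computes $\|f\|$. It divides $f$ by the computable upper bound $M$ from Proposition~\ref{prop:linear} and applies Theorem~\ref{thm:one-step} with constant $1$. The two bounds are then a supremum and an infimum of computable reals, i.e.\ a $\C\C_{[0,1]}$-instance after rescaling. Finally it multiplies the result by $M$. This is your last paragraph with $c$ replaced by $M$.

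Your suspicion about this normalisation is justified, though. It only guarantees $\|g\|\le M$, so the argument proves the reduction for the variant in which the bound to be respected is part of the instance (e.g.\ $\|f\|$ is given, or $\|f\|\le1$ and $\|g\|\le1$ is required). For the literal requirement $\|g\|\le\|f\|$, no method can work.

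To see this, take a non-computable left-c.e.\ real $c\in(0,1]$ and computable rationals $0<r_1<r_2<\dots\to c$. Let $A:=\{y\in\ell^1:\sum_k y_k=0\}$, which is the closed span of the $e_0-e_n$ and a computable element of $\SS(\ell^1)$. Put $f(y):=-2\sum_{n\ge1}r_ny_n$ on $A$ and $x:=e_0$.
\begin{itemize}
\item $f$ is computable, and $f(e_0-e_n)=2r_n$ with $\|e_0-e_n\|_1=2$, so $\|f\|\ge c$.
\item On $A$, $f$ is represented by $(c,c-2r_1,c-2r_2,\dots)\in\ell^\infty$, whose sup-norm is $c$, so $\|f\|=c$.
\item Any admissible output $g$ is an extension to $A+\IR e_0=\ell^1$ with $\|g\|\le c$, given by some $v\in\ell^\infty$ with $\|v\|_\infty\le c$. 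It satisfies $v_0=2r_n+v_n\ge 2r_n-c$ for all $n$, hence $v_0\ge c$ and so $g(e_0)=v_0=c$.
\end{itemize}
So this computable instance has a unique solution, and that solution is non-computable. Every computable instance of $\C\C_{[0,1]}$, by contrast, has a computable solution. Hence $\HBT^1_{\ell^1}\nleqW\C\C_{[0,1]}$ under the literal definition, and the proposition can only be proved in the variant with the norm bound supplied.
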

\begin{proof}
Given a functional $f:A\to\IR$ with $0<\|f\|<\infty$ and $x\in X$ we
can assume that $\|f\|\leq1$, because we can just divide $f$ by
an upper bound $M$ of $\|f\|$ that can be computed according to Proposition~\ref{prop:linear}. 
Then we obtain an extension $g:A+\IR x\to\IR$ with $\|g\|\leq1$, 
where we determine $g(x)$ with the help of $\C\C_{[0,1]}$ and Theorem~\ref{thm:one-step}.
By Proposition~\ref{prop:linear} we can actually compute $g$ as a point in the function space $\CC(A+\IR x)$
with the available information. 
In order to convert $g$ into an extension of the original functional,
we have to multiply it with $M$ again.
\qed
\end{proof}

We emphasize that this proof only yields an ordinary Weihrauch reduction,
not a strong one. 
If we apply the one-step version of the Hahn-Banach problem
repeatedly in an infinite loop, then we get a new proof of the well-known
reduction to Weak K\H{o}nig's lemma.

\begin{theorem}
$\HBT_X\leqW\WKL$ for every computable normed space. 
\end{theorem}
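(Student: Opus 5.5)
We show $\HBT_X\leqW\HBT_X^\infty$-style looping and then use the upper bounds already available: by Proposition~\ref{prop:HBT1-CC} and Proposition~\ref{prop:WKL-IVT} we have $\HBT_X^1\leqW\C\C_{[0,1]}\equivW\IVT$. Since $f\mapsto f^\infty$ is monotone with respect to Weihrauch reducibility, we get $\IVT^\infty\equivW\WKL$. So it suffices to prove $\HBT_X\leqW\IVT^\infty$. Concretely, we reduce $\HBT_X$ to an infinite loop of the problem $P$ that maps $(f,A,n)$ to an extension of $f$ on $A+\IR e_n$ with norm at most $\|f\|$, together with the incremented index $n+1$. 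Here $e$ is the fundamental sequence of $X$. We have $P\leqW\HBT_X^1$ uniformly; only the bookkeeping of $n$ is added, and this can be carried along in the identity part of the loop.

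The construction runs as follows. Given $(f,A)$ with $0<\|f\|<\infty$, we first compute an upper bound $M\geq\|f\|$ via Proposition~\ref{prop:linear} and normalize to $\|f\|\leq 1$. This is done once, before the loop. It matters because the one-step reduction of Proposition~\ref{prop:HBT1-CC} renormalizes by a bound, and we want the norm bound $1$ preserved exactly at every stage. With $\|f\|\leq1$ fixed, each loop step needs no rescaling: we apply Theorem~\ref{thm:one-step} directly with bound $1$ and obtain $g(e_n)\in[\sup_{y}(f(y)-\|e_n-y\|),\inf_y(f(y)+\|e_n-y\|)]$ via $\C\C_{[0,1]}$. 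Strictly, we use $\C\C_{[-c,c]}$ with $c\geq\|e_n\|$, which is equivalent by a computable affine rescaling.

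The stages are $A_0:=A$, $f_0:=f$ and $A_{n+1}:=\overline{A_n+\IR e_n}$, with $f_{n+1}$ the extension of $f_n$ produced at stage $n$. Each $f_n$ has norm at most $1$. The output of the loop is the whole sequence $\langle q_0,q_1,\dots\rangle$ of names. From it we must compute $g$ on $X$. The $f_n$ are compatible, and their union is defined on $\bigcup_n A_n\supseteq A+\span\{e_n\}$, which is dense in $X$. We know the values $g(e_n)=f_{n+1}(e_n)$ for all $n$, together with the uniform bound $1$, hence $M\cdot 1$ after undoing the normalization. Proposition~\ref{prop:linear}, via its computable left inverse, therefore yields a name of the unique bounded linear extension $g\in\CC(X)$ with $\|g\|\leq 1$. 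Multiplying by $M$ gives an extension of the original $f$ with $\|g\|\leq\|f\|$, because the norm bound $\|f\|/M$ was preserved at each step.

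The main obstacle is fitting this into the formal definition of $\IVT^\infty$, that is, $\C\C_{[0,1]}^\infty$. Each step must be an application of $\U\circ\langle\id\times\C\C_{[0,1]}\rangle$ whose input depends computably on the previous output. The domain condition requires every stage's input to be a valid instance: the interval endpoints must be nested with lower $\leq$ upper, which Theorem~\ref{thm:one-step} guarantees for every admissible earlier choice. The final readout is also an issue, since the loop's output is only the sequence of stages, and $g$ must be recovered continuously from it. I would handle this by letting each $q_{i+1}$ carry, in its identity part, the full current name of $(f_i,A_i)$ together with $M$ and $i$, so that the readout only collects the values $g(e_n)$. A minor point is the degenerate case $\|f_n\|$ of a restriction. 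Since we never divide at intermediate stages, it causes no problem even if $e_n\in A_n$.
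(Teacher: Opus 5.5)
Your route is the paper's: normalize, loop the one-step extension through $\IVT$ along $(e_n)$, read off $g$ via Proposition~\ref{prop:linear}, and use $\IVT^\infty\equivW\WKL$. However, the norm bookkeeping has a genuine gap.

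You normalize by an \emph{upper bound} $M\geq\|f\|$ and then run every step with bound $1$. What you actually get is $\|g\|\le 1$, i.e.\ $\|Mg\|\le M$, which can exceed $\|f\|$. Your justification, ``the norm bound $\|f\|/M$ was preserved at each step'', contradicts your own construction: the intervals you feed to $\IVT$ are those of Theorem~\ref{thm:one-step} for bound $1$, not for bound $\|f\|/M$. You also cannot switch to bound $\|f\|/M$. From $(f,A)\in\CC_\SS(X)$ the norm $\|f\|$ is in general only lower semicomputable, so the lower endpoint $\sup_{y\in A}(f(y)-\|f\|\,\|x-y\|)$ cannot be approximated from below. (A minor point: monotonicity of $f\mapsto f^\infty$ gives $(\HBT^1_X)^\infty\leqW\IVT^\infty$, whereas $\IVT^\infty\equivW\WKL$ is Proposition~\ref{prop:WKL-IVT}.)

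This cannot be patched for the problem as literally defined, where $\|f\|$ is not part of the input. Here is a counterexample.
\begin{itemize}
\item Let $c\in(0,1)$ be a non-computable left-c.e.\ real, $c=\sup_n c_n$ with computable rationals $0<c_n\le c$.
\item In $\ell^1(\IN\times\{0,1,2\})\cong\ell^1$, let $A$ be the closed span of $v_n:=e_{n,0}-e_{n+1,0}$, $u_n:=e_{n,0}-e_{n,1}$ and $t_n:=e_{n,2}$. Put $f(v_n):=0$, $f(u_n):=2c_n$, $f(t_n):=c_n$.
\item Then $(f,A)$ is computable, using bound $1$ in Proposition~\ref{prop:linear}.
\item The functional $w_{n,0}:=c$, $w_{n,1}:=c-2c_n$, $w_{n,2}:=c_n$ extends $f$, so $\|f\|\le c$. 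Since $f(t_n)=c_n$ with $\|t_n\|_1=1$, in fact $\|f\|=c$.
\item If $w\in\ell^\infty$ represents any extension with $\|w\|_\infty\le c$, then $w_{n,0}=y$ for all $n$. Moreover $y=2c_n+w_{n,1}\ge 2c_n-c\to c$, hence $y=c$.
\end{itemize}
So every solution $g$ satisfies $g(e_{0,0})=c$ and therefore computes $c$. By the cone avoidance basis theorem this is impossible for a problem Weihrauch reducible to $\WKL$. With $x:=e_{0,0}$ the same instance even gives $\HBT^1_{\ell^1}\nleqW\IVT$, since $\IVT$ has computable solutions on computable instances.

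The missing ingredient is that $\|f\|$ itself must be available. For example, supply $\|f\|$ with the input (e.g.\ restrict to $\|f\|=1$), or require only $\|g\|\le M$ for a bound $M$ given with the input, in the style of Brown and Simpson. Under either reading your argument goes through, normalizing by $\|f\|$ (respectively by the given $M$), and it then coincides with the paper's proof.

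Be aware that the paper's own proof has the same weak point. Its ``assume $\|f\|\le1$'' rests on Proposition~\ref{prop:HBT1-CC}, whose proof divides by an upper bound $M$ and multiplies back.
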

\begin{proof}
Starting from a functional $f:A\to\IR$ for which we can again
assume $\|f\|\leq1$, we can just repeatedly
apply $\HBT_X^1$ for the fundamental sequence $(e_n)_n$ of the space $X$.
Inductively, starting from $f_0:=f$ and $A_0:=A$ 
this yields functionals $h_{n+1}:A_n+\IR e_n\to\IR$ and closed sets $A_{n+1}$
as closure of the linear span of $A_n+\IR e_n$.
Using Proposition~\ref{prop:linear} we can compute extensions of each $h_{n+1}$
to a functional of type $f_{n+1}:A_{n+1}\to\IR$ that is used for the next
application of $\HBT_X^1$. Hence, every $f_{n+1}$ is a linear extension of $f_n$
with $\|f_n\|\leq1$.
Altogether, the values $(f_{n+1}(e_n))_{n\in\IN}$ determine
a linear functional $g:X\to\IR$ that extends $f$ with $\|g\|\leq1$
and these data suffice to obtain $g$ as a point in $\CC(X)$ by Proposition~\ref{prop:linear}.
Altogether, by Proposition~\ref{prop:WKL-IVT}, this proves 
$\HBT_X\leqW\IVT^\infty\equivW\WKL$. 
\qed
\end{proof}

In the finite-dimensional case, the same argument requires only finitely
many applications of $\IVT$, which
yields Proposition~\ref{prop:HBT-finite}.
In the next section we will see that this bound is not sharp,
not even for the $\ell^1$--norm on $\IR^2$.

Next we want to prove that the one-step Hahn-Banach theorem
reaches its maximal complexity for $X=\ell^1$.
We use the computable linear isometry
\begin{eqnarray}
	R:\ell^1(\IN\times\{0,1\})\to\ell^1,R((x_{n,i})_{(n,i)\in\IN\times\{0,1\}})(2n+i):=x_{n,i},
\label{eq:R}
\end{eqnarray}
which allows us to identify $\ell^1(\IN\times\{0,1\})$ with $\ell^1$.
We also use the standard fundamental sequence $(e_{n,i})$ of unit vectors of $\ell^1(\IN\times\{0,1\})$.

\begin{proposition}
	$\C\C_{[0,1]}\leqW\HBT_{\ell^1}^1$.
\end{proposition}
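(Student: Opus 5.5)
The plan is to encode the nested intervals of a $\C\C_{[0,1]}$ instance directly into the one-step extension problem on $\ell^1$, identified with $\ell^1(\IN\times\{0,1\})$ via the isometry $R$ from (\ref{eq:R}). The approach is to reserve one unit vector $u:=e_{0,0}$ as the point $x$ to which we extend, and to use one fresh coordinate $e_{n+1,0}$ per interval as slack. For an extension $g$ with $\|g\|\leq1$ and $g(u)=t$, the value of $g$ on the vector $u+\lambda e_{n+1,0}$ can be any number in $[t-\lambda,t+\lambda]$. Prescribing this value therefore forces $t$ into an interval of radius $\lambda$.

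Concretely, given $((a_n),(b_n))$ with $a_n\leq a_{n+1}\leq b_{n+1}\leq b_n$, compute
\[\mu_n:=\tfrac{a_n+b_n}{2},\qquad \lambda_n:=\tfrac{b_n-a_n}{2}\geq0,\qquad w_n:=u+\lambda_n e_{n+1,0}.\]
Let $A$ be the closed linear span of $\{e_{0,1}\}\cup\{w_n:n\in\IN\}$. Define $f$ on $A$ by $f(e_{0,1}):=1$, $f(w_n):=\mu_n$, extended linearly. The vector $e_{0,1}$ is included only to guarantee $\|f\|\geq1>0$, so that the instance lies in $\dom(\HBT^1_{\ell^1})$. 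The input to $\HBT^1_{\ell^1}$ is $(f,A,u)$. The sequence $(e_{0,1},w_0,w_1,\ldots)$ is a computable sequence in $\ell^1$ that serves as a name of $A\in\SS(\ell^1)$.

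Next I would verify that $f$ is well defined with $\|f\|=1$. Pick $t\in\bigcap_n[a_n,b_n]$, which is nonempty by nestedness and compactness. Define $h\in\ell^\infty=(\ell^1)^*$ by $h(u)=t$, $h(e_{0,1})=1$, and $h(e_{n+1,0})=(\mu_n-t)/\lambda_n\in[-1,1]$ if $\lambda_n>0$ and $0$ otherwise. When $\lambda_n=0$ we have $w_n=u$ and $\mu_n=t$, so this choice is consistent. Then $h$ restricts to $f$ on $A$, which shows at once that $f$ is well defined and that $\|f\|\leq1$; together with $f(e_{0,1})=1$ this gives $\|f\|=1$.

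Conversely, let $g$ be any extension to $A+\IR u$ with $\|g\|\leq1$, and put $t:=g(u)$. Then $|\mu_n-t|=|g(w_n-u)|\leq\|\lambda_n e_{n+1,0}\|=\lambda_n$, so $t\in[a_n,b_n]$ for all $n$. Hence the output map evaluates the returned $g\in\CC(A+\IR u)$ at the computable point $u$, which is computable, and returns $t$.

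The step I expect to need the most care is producing $f$ as a name in $\CC_\SS(\ell^1)$ and not just as a list of values. For this I would apply Proposition~\ref{prop:linear} to the space $A$ with fundamental sequence $(e_{0,1},w_0,w_1,\ldots)$ and bound $M=1$. To do so I must check that $A$ with this sequence is a computable normed space uniformly in the input. This holds because the $\ell^1$-norm of a rational combination $\alpha e_{0,1}+\sum_{n\leq k}\beta_n w_n$ equals $|\alpha|+|\sum\beta_n|+\sum|\beta_n|\lambda_n$, which is computable from the $\lambda_n$. The computable left inverse in Proposition~\ref{prop:linear} then yields a realizer $\U_q\vdash f$ uniformly. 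Everything else is routine.
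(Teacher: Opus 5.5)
Your proof is correct. It is a leaner variant of the paper's construction rather than the same one.

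\textbf{Comparison with the paper.} The paper uses a separate anchor coordinate for each interval. It takes $u_n=e_{n,0}+\alpha_n e_{n,1}$ with $\alpha_n=(a_n-b_n)/2$ and $f(u_n)=\beta_n=(a_n+b_n)/2$. It ties the anchors together by $v_n=e_{n,0}-e_{n+1,0}$ with $f(v_n)=0$, and extends to the point $x=\sum_n 2^{-n-1}e_{n,0}$. To analyse a solution, it extends the returned functional to all of $\ell^1$ by the classical Hahn--Banach theorem and represents it by an element of $\ell^\infty(\IN\times\{0,1\})$ of norm at most $1$. From $f(v_n)=0$ it reads off that all $(n,0)$-entries equal a common value $y$, and from $f(u_n)=\beta_n$ that $y\in[a_n,b_n]$. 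It also assumes $a_n<b_n$ without loss of generality.

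You collapse all anchors onto $u=e_{0,0}$, so the linking vectors disappear and the target point is simply $u$. The key step becomes the one-line estimate $|g(w_n)-g(u)|\le\|w_n-u\|_1=\lambda_n$. This is just the bound of Theorem~\ref{thm:one-step} tested at the point $w_n\in A$, with no detour through the dual of $\ell^1$. Duality enters your argument only through the witness $h$, which plays exactly the role of the paper's ``reverse'' argument: it gives well-definedness and $\|f\|\le 1$.

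What each route buys: the paper's duality computation describes all norm-bounded extensions to the whole space at once. Yours isolates precisely the inequality that is needed. Your version also treats degenerate intervals $\lambda_n=0$ directly, without the extra normalization. The generator $e_{0,1}$ makes the domain condition $\|f\|>0$ explicit; the paper leaves this implicit, where it follows from $\beta_n>0$.

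\textbf{One slip in the bookkeeping.} The sequence $(e_{0,1},w_0,w_1,\ldots)$ is not itself an $\SS(\ell^1)$-name of $A$, because its closure is not the closed linear span. You have to enumerate all rational linear combinations of these vectors, which is of course computable from the $\lambda_n$.

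\textbf{A point to make explicit.} Proposition~\ref{prop:linear}, applied to $A$ with its own fundamental sequence, delivers $f$ with respect to the Cauchy representation induced by that sequence. However, $\CC_\SS(\ell^1)$ asks for a realizer acting on $\ell^1$-names of points of $A$. The conversion is computable: given $z\in A$ and $k$, search for a rational combination $s$ of the generators with $\|z-s\|_1<2^{-k}$. This is semi-decidable and must succeed since $z\in A$. Then output $f(s)$, which satisfies $|f(z)-f(s)|<2^{-k}$ because $\|f\|\le 1$. The paper leaves this step implicit as well.
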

\begin{proof}
Given two sequences $(a_n)_{n\in\IN}$ and $(b_n)_{n\in\IN}$ of rational numbers
in $[0,1]$ such that $a_n\leq a_{n+1}$ and $b_{n+1}\leq b_{n}$ with
$a:=\sup_{n\in\IN}a_n\leq\inf_{n\in\IN}b_n=:b$, the goal is to find a real number $y\in[0,1]$
with $a\leq y\leq b$. Without loss of generality, we can even assume $a_n<b_n$ for all $n\in\IN$.

We work with the space $X=\ell^1(\IN\times\{0,1\})$ that is isomorphic to $\ell^1$ by (\ref{eq:R}).
We now compute a functional $f:A\to\IR$ on a subspace $A\In X$
with $\|f\|\leq1$ and a point $x\in X$ such that every 
linear extension $g:A+\IR x\to\IR$ of $f$ with $\|g\|\leq1$
satisfies $a\leq g(x)\leq b$. This proves $\C\C_{[0,1]}\leqW\HBT_{\ell^1}^1$.

In order to construct $f$, we first compute
\[\alpha_n:=\frac{a_n-b_n}{2}<0\text{ and }\beta_n:=\frac{a_n+b_n}{2}\in[0,1]\]
from the input data and then for all $n\in\IN$
\[u_n:=e_{n,0}+\alpha_n e_{n,1}\text{ and }v_n:=e_{n,0}-e_{n+1,0}.\]
Hence, we can also compute the closure $A\in\SS(X)$
of the linear span of $B:=\{v_n,u_n:n\in\IN\}$
and $x:=\sum_{n\in\IN}2^{-n-1}e_{n,0}\in X$. Since $B$ is linearly independent, 
there is a unique linear $f_0:\span(B)\to\IR$ with the values 
\[f_0(v_n):=0\text{ and }f_0(u_n):=\beta_n\]
for all $n\in\IN$. We claim that $f_0$ is bounded with $\|f_0\|\leq1$
and hence it extends uniquely to a linear bounded
functional $f:A\to\IR$ with $\|f\|\leq1$ by the Hahn-Banach theorem.
We continue assuming this claim for the moment.

Let $h:A+\IR x\to\IR$ be a functional that we receive as
output of $\HBT^1_X(f,A,x)$. By the classical Hahn-Banach theorem
$h$ has an extension $g:X\to\IR$ that is  
a linear continuous extension of $f$ with
$\|g\|\leq1$. Since $Y=\ell^\infty(\IN\times\{0,1\})$ is the dual space
of $X$, there is a $w\in Y$ 
with $\|w\|_\infty\leq1$ such that
\[g(z)=\langle w,z\rangle:=\sum\nolimits_{k\in\IN}(w_{k,0}z_{k,0}+w_{k,1}z_{k,1})\]
for all $z\in X$ (here $\langle\cdot\rangle$ simply denotes the duality pairing).
Then 
\[0=g(v_n)=\langle w,e_{n,0}-e_{n+1,0}\rangle=w_{n,0}-w_{n+1,0}\]
and hence the values $y:=w_{n,0}$ are constant for all $n\in\IN$.
We also obtain
\[\beta_n=g(u_n)=\langle w,e_{n,0}+\alpha_ne_{n,1}\rangle=w_{n,0}+\alpha_nw_{n,1}=y+\alpha_nw_{n,1}\]
for all $n\in\IN$. Since $\|w\|_\infty\leq1$, we have $|w_{n,1}|\leq 1$ and hence
\[y=\beta_n-\alpha_nw_{n,1}\in[\beta_n+\alpha_n,\beta_n-\alpha_n]=[a_n,b_n]\]
for all $n\in\IN$, which implies $a\leq y\leq b$.

The preceding argument can also be reversed.
If we start with some arbitrary $y$ with $a\leq y\leq b$,
then we can choose $w_{n,1}$ with $|w_{n,1}|\leq1$ such 
that $\beta_n=y+\alpha_nw_{n,1}$ and $w_{n,0}=y$.
Then $w=(w_{n,0},w_{n,1})_n\in Y$ is a point with $\|w\|_\infty\leq1$
that hence defines a functional $g:X\to\IR,z\mapsto\langle w,z\rangle$ with $\|g\|\leq1$
and this functional extends $f_0$ by the same calculation as above.
This proves the claim that $f_0$ can be extended to $f:A\to\IR$ with $\|f\|\leq1$.

If $g:X\to\IR$ is now an extension of $f$ with $\|g\|\leq1$ as above, 
then we can evaluate $g$ on $x$
and we obtain
\[g(x)=\left\langle w,\sum_{n\in\IN}2^{-n-1}e_{n,0}\right\rangle=\sum_{n\in\IN}2^{-n-1}w_{n,0}=\sum_{n\in\IN}2^{-n-1}y=y\in[a,b].\]
This implies $h(x)=y\in[a,b]$ and completes the proof.
\QED
\end{proof}

Together with Proposition~\ref{prop:HBT1-CC} we obtain the desired characterization.

\begin{corollary}
$\HBT_{\ell^1}^1\equivW\IVT\equivW\C\C_{[0,1]}$.
\end{corollary}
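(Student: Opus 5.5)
The corollary follows by chaining results already established. For the upper bound, Proposition~\ref{prop:HBT1-CC} gives $\HBT_X^1\leqW\C\C_{[0,1]}$ for every computable normed space $X$. Since $\ell^1$ with its standard unit vectors as fundamental sequence is a computable normed space, we may take $X=\ell^1$. For the lower bound, the preceding proposition gives $\C\C_{[0,1]}\leqW\HBT_{\ell^1}^1$. Together these yield $\HBT_{\ell^1}^1\equivW\C\C_{[0,1]}$.

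The remaining equivalence $\IVT\equivW\C\C_{[0,1]}$ is part of Proposition~\ref{prop:WKL-IVT}. Since $\equivW$ is transitive, we obtain $\HBT_{\ell^1}^1\equivW\IVT\equivW\C\C_{[0,1]}$.

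One bookkeeping point needs checking: the lower-bound proof works in $\ell^1(\IN\times\{0,1\})$, not in $\ell^1$ itself. The map $R$ from (\ref{eq:R}) is a computable linear isometry with a computable inverse. It therefore transports the subspace $A\in\SS(X)$, the functional $f$ and the point $x$ computably into $\ell^1$, and it transports the output functional back. Hence the reduction is indeed to $\HBT_{\ell^1}^1$. There is no genuine obstacle here, since all of the work is already contained in the two propositions.
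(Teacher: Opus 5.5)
Your proposal is correct and follows the paper's own argument. The paper obtains the corollary by combining Proposition~\ref{prop:HBT1-CC} (upper bound) with the preceding proposition (lower bound, using $R$ to identify $\ell^1(\IN\times\{0,1\})$ with $\ell^1$) and with $\IVT\equivW\C\C_{[0,1]}$ from Proposition~\ref{prop:WKL-IVT}; your transport check via $R$ just makes explicit what the paper leaves implicit.
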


\section{The Hahn-Banach Theorem for $\ell^1_2$}
\label{sec:ell1-2}

We identify the space $\ell^p_2$ with $\IR^2$ equipped with the $\ell^p$--norm.
In this section we want to prove that for $\ell^1_2$ (and $\ell^\infty_2$), 
the Hahn-Banach theorem is equivalent to $\LLPO$,
which shows that the upper bound given in Proposition~\ref{prop:HBT-finite}
is not tight, not even for the $\ell^1$--norm.

In fact, it suffices to consider the case of $\ell^1$, as there is a computable
linear isometric map
\begin{eqnarray}
	S:\IR^2\to\IR^2,(u,v)\mapsto\left(\frac{u-v}{2},\frac{u+v}{2}\right)
\label{eq:S}
\end{eqnarray}
that satisfies 
$
\|S(u,v)\|_1=\frac12(|u+v|+|u-v|) = \max(|u|,|v|)=\|(u,v)\|_\infty.
$
In Figure~\ref{fig:balls} the respective unit balls are illustrated.
We consider the case $X=\ell^1_2$. If we have a functional $f:A\to\IR$
with $\|f\|=1$ defined on a one-dimensional subspace $A\In\IR^2$, then the extension
of this functional to a functional $g:\IR^2\to\IR$ with $\|g\|=1$ is actually
uniquely determined, provided that $A$ does not cross any corner of the unit ball.
This is because $f^{-1}\{1\}$ is an affine subspace that is not allowed
to run through the interior of the ball (because $\|f\|=1$) and 
hence it has to include one of the sides of the unit ball, which fixes
all the values of the extension. One can use $\LLPO$ to determine on
which side of the unit ball the affine hyperplane $f^{-1}\{1\}$ lies.
We recall that $\LLPO$ is equivalent to the problem of determining
one of the cases $r\leq0$ or $r\geq0$, which holds for a real $r\in\IR$.
For the other direction of the reduction we use an idea of Ishihara~\cite{Ish90}.

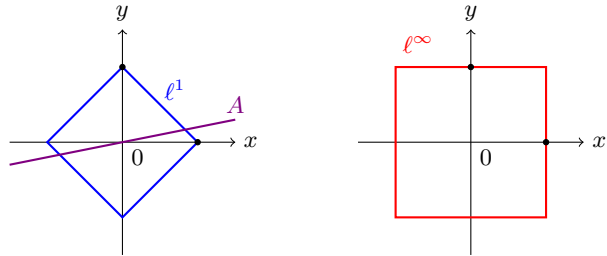
\begin{figure}[htb]
\begin{center}
\begin{tikzpicture}
	\draw[->] (-1.5,0) -- (1.5,0) node[right] {$x$};
	\draw[->] (0,-1.5) -- (0,1.5) node[above] {$y$};
	\node at (0.2,-0.2) {$0$};
	\draw[blue, thick] (1,0) -- (0,1) -- (-1,0) -- (0,-1) -- cycle;
	\node[blue] at (0.7,0.7) {$\ell^1$};
	\filldraw[black] (1,0) circle (1pt);
	\filldraw[black] (0,1) circle (1pt);
	\draw[violet, thick] (-1.5,-0.3) -- (0,0) -- (1.5,0.3);
	\node[violet] at (1.5,0.5) {$A$};
	\end{tikzpicture}
\hspace{1cm}
\begin{tikzpicture}
	\draw[->] (-1.5,0) -- (1.5,0) node[right] {$x$};
	\draw[->] (0,-1.5) -- (0,1.5) node[above] {$y$};
	\node at (0.2,-0.2) {$0$};
	\draw[red, thick] (1,1) -- (-1,1) -- (-1,-1) -- (1,-1) -- cycle;
	\node[red] at (-0.7,1.3) {$\ell^\infty$};
	\filldraw[black] (1,0) circle (1pt);
	\filldraw[black] (0,1) circle (1pt);
\end{tikzpicture}
\end{center}
\vspace{-0.4cm}
\caption{Unit balls in $\IR^2$ with respect to $\ell^1$ and $\ell^\infty$.}
\label{fig:balls}
\end{figure}

\begin{proposition}
$\HBT_{\ell^1_2}\equivW\HBT_{\ell^\infty_2}\equivW\LLPO$.
\end{proposition}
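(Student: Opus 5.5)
The plan is to first reduce to the $\ell^1_2$ case via the isometry $S$ from (\ref{eq:S}). Since $S$ is a computable linear isometry with computable inverse, it transports instances and solutions of $\HBT_{\ell^\infty_2}$ to those of $\HBT_{\ell^1_2}$ and back. Concretely, $(f,A)\mapsto(f\circ S^{-1},S(A))$ and $g\mapsto g\circ S$, so the two problems are equivalent (even strongly). It remains to show $\HBT_{\ell^1_2}\equivW\LLPO$.

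\emph{Upper bound.} Given $(f,A)$, we first compute an upper bound $M$ of $\|f\|$ by Proposition~\ref{prop:linear} and normalize. The extension $g$ is determined by the pair $w=(w_0,w_1)=(g(e_0),g(e_1))$, and we need $\|w\|_\infty\leq\|f\|$. The difficulty is that $A$ is only given as a separable closed set: it may be $\{0\}$, a line, or all of $\IR^2$, and we cannot decide which.

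The plan is to search through the dense sequence $(x_n)$ of $A$. As long as all $x_n$ seen are $0$, nothing is committed. Once a nonzero $x=(s,t)$ appears, $A\supseteq\IR x$, and the set of admissible $w$ on the line $\IR x$ consists of those with $sw_0+tw_1=f(x)$ and $\|w\|_\infty\leq\|f\|$, where $\|f\|\geq |f(x)|/(|s|+|t|)$. Write $c:=f(x)/(|s|+|t|)$. The candidate solutions are $w=(\sign(s)c',\sign(t)c')$-type corner-free points of a segment. Since $\|f\|=|c|$ when $A$ is that line, the admissible set is a segment through the corner $(\sign(s)c,\sign(t)c)$ when $st\neq0$; when $t=0$ or $s=0$, it is a side of the square.

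One $\LLPO$ instance should be used to decide a sign. I would apply $\LLPO$ to decide $s\cdot t\geq0$ or $\leq0$ for the (eventually found) direction, with the convention that if $A=\{0\}$ the answer is irrelevant. Given the sign answer, the point $w:=(\sign_0 \cdot c,\ \sigma\sign_0\cdot c)$ (with $\sigma$ the chosen sign) works on the whole line: if $s,t\geq0$, then $s c+t c=f(x)$. If $A$ turns out to be $\IR^2$, then $f$ is already total and $g=f$; this is consistent with the computed $w$ because $f$ is determined on the line.

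The main obstacle is making this uniform. We must output approximations of $w$ before knowing whether any nonzero $x_n$ exists, and before knowing $\sign(s),\sign(t)$ exactly. The choice of the best direction $x_n$ (one with large norm) also changes over time. I would use the standard trick that $\LLPO$ (equivalently the parallelization-free question ``$r\le0$ or $r\ge0$'') can be asked for a single real $r$ computed from the whole input. A natural choice is $r:=f(e_0)\cdot f(e_1)$-like quantities expressed via the values on $A$: for instance $r$ is a suitable limit of $s_nt_n f(x_n)^2/\|x_n\|^4$ with fast convergence. With the answer in hand, $w$ is then computed as $\bigl(\max$ over $n$ of $f(x_n)/\|x_n\|_1$ with the chosen signs$\bigr)$, which converges computably. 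The cases where $r=0$ (an axis-parallel $A$ or $A=\{0\}$) must be checked to give a correct $w$ under either answer.

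\emph{Lower bound.} Following Ishihara, given $(p_0,p_1)$ we compute the real $r:=\sum_n 2^{-n}(p_0(n)-p_1(n))$ (with at most one of the $p_i$ nonzero, so that the correct $\LLPO$ answer is read off the sign of $r$). We let $A=\IR(1,r)$ and $f(1,r):=1$. When $r>0$ the unique norm-preserving extension is $w=(1,1)$, and when $r<0$ it is $w=(1,-1)$. So from $g$ we compute $w_1=g(e_1)$, which is either $\geq0$ or $\leq0$. We then read the answer by testing $w_1>-1/2$ versus $w_1<1/2$, at least one of which holds and is semidecidable, and this correctly identifies an $i$ with $p_i=\widehat0$. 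One must also check $0<\|f\|<\infty$, which holds since $\|f\|=1/(1+|r|)$, and normalize accordingly.
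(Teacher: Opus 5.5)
Your transfer between $\ell^\infty_2$ and $\ell^1_2$ via $S$ and your lower bound (Ishihara's construction) are the paper's argument. In the lower bound, take $f(1,r):=1+|r|$ as the paper does. With $f(1,r)=1$ the extension is $(1,\sign(r))/(1+|r|)$ rather than $(1,\pm1)$, and for $r>1$ the test $w_1<\frac12$ could then fire although $r>0$.

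Your first upper-bound plan is also the paper's: wait for a nonzero $x=(s,t)\in A$ with a witnessed nonzero coordinate, put $c:=f(x)/\|x\|_1$, ask $\LLPO$ whether $st\geq0$ or $st\leq0$, and output the corresponding corner of the square of radius $|c|$. When $A=\IR x$ this is complete, for three reasons:
\begin{enumerate}
\item For $st\neq0$ the admissible set is that corner alone, not a segment.
\item $|c|=\|f\|$, so no bound $M$ is needed; dividing by a mere upper bound would not give $\|g\|\leq\|f\|$.
\item The worry about producing output before a nonzero $x_n$ is found is unfounded. Since $\|f\|>0$ such an $x_n$ exists, so the reduction may simply search for it, padding the $\LLPO$ instance with zeros meanwhile.
\end{enumerate}

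\textbf{The gap: the case $A=\IR^2$.} Your claim that the computed $w$ is then consistent with $f$ is false. For $f(a,b)=a+2b$ on $A=\IR^2$ and $x=(1,0)$ you output $(1,\pm1)$, whereas $g=f$ is the only admissible extension. The repair in your ``main obstacle'' paragraph cannot work either. It uses a single real $r$ obtained as an unspecified limit, and a supremum of $f(x_n)/\|x_n\|_1$, which is only lower semi-computable. More fundamentally, no reduction to $\LLPO$ can handle a subspace that may be either a line or the plane. Here is why:
\begin{enumerate}
\item Name $f$ by a fixed program for $(a,b)\mapsto a+2b$. For $s\in\IN\cup\{\infty\}$, name $A_s$ by a sequence listing the points $(q,0)$, $q\in\IQ$, and from index $s$ on also all of $\IQ^2$.
\item For $s<\infty$ we have $A_s=\IR^2$, so the only solution has $g(0,1)=2$. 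For $s=\infty$ we get $A_\infty=\IR(1,0)$ and $\|f\|=1$, so every solution has $|g(0,1)|\leq1$. The names converge as $s\to\infty$.
\item Suppose $K,H$ witnessed the reduction. Some $j$ is a correct $\LLPO$ answer for $K$ of infinitely many $s$-names. Then $j$ is also correct for $K$ of the limit name, because a component that is not $\widehat{0}$ in the limit is not $\widehat{0}$ for large $s$.
\item Continuity of $H$ would then produce a solution for $s=\infty$ with $g(0,1)=2$, a contradiction.
\end{enumerate}
The same failure occurs even if $\|f\|=1$ is imposed. Replace the slope $2$ by three slopes $\sigma\in\{-\frac12,0,\frac12\}$, use realizers that consult $\sigma$ only after detecting $b\neq0$, and apply pigeonhole to the two $\LLPO$ answers.

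So the upper bound holds only for one-dimensional $A$. That is what the paper's proof tacitly assumes: it speaks of a functional ``defined on a one-dimensional subspace'' and only ever treats lines. Restrict to that case, where your first plan already suffices, and drop the global construction.
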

\begin{proof}
We consider the case of $X=\ell^1_2$, i.e., $\IR^2$ with the $\ell^1$--norm.
Given a functional $f:A\to\IR$ for some linear subspace $A\In\IR^2$
with $0<\|f\|\leq1$. In fact, we can assume $\|f\|=1$, as we can divide
$f$ by its norm (as the operator norm is computable for finite-dimensional spaces~\cite{Bra03c}). 
We know that $A\not=\{0\}$ since $\|f\|>0$. Hence, we can find
some $0\not=x=(x_0,x_1)\in A$ with $\|x\|_1=1$ and we can find some $i\in\{0,1\}$ with $x_i\not=0$.  
Without loss of generality, we assume $x_0> 0$ and $f(x_0,x_1)=1$. 
If also $x_1\not=0$, then the subspace $A$ is not a one-dimensional
subspace that crosses one of the corners of the unit ball and the extension
of $f$ is uniquely determined. 
In fact, if $x_1>0$, then the norm-preserving extension $g$ is uniquely
determined by the additional condition $g(0,1)=1$, and if $x_1<0$, then it is
uniquely determined by $g(0,-1)=1$. If $x_1=0$, then both of these
norm-preserving extensions are possible.
With the help of $\LLPO$ we can select one of these cases.
The remaining cases are handled analogously. 
Altogether, this shows $\HBT_{\ell^1_2}\leqW\LLPO$.

For the proof of $\LLPO\leqW\HBT_{\ell^1_2}$ we follow a construction
of Ishihara~\cite{Ish90}. Given $r\in\IR$ we consider
$x=(1,r)\in\IR^2$, the subspace $A:=\{ax:a\in\IR\}\In\IR^2$
and the functional $f:A\to\IR$ with $f(ax):=a\cdot\|x\|_1=a(1+|r|)$.
Then $\|f\|=1$. Let $g:\IR^2\to\IR$ be a linear extension of $f$ with
$\|g\|=1$. Then $|g(e_i)|\leq1$ holds for the two unit vectors $e_1=(1,0),e_2=(0,1)\in\IR^2$.
Hence
\begin{eqnarray}
	1+|r|=f(x)=g(x)=g(e_1+re_2)=g(e_1)+r g(e_2)\leq g(e_1)+|r|.
\end{eqnarray}
Then $g(e_1)=1$ and hence $rg(e_2)=|r|$. In order to check whether
$r\leq0$ or $r\geq0$ we just have to find out whether
$g(e_2)>-1$ or $g(e_2)<1$. These conditions are semi-decidable in 
the input and can hence be tested in parallel. Depending on which
one is witnessed first, we output $1$ or $0$, respectively.
Since $r>0$ implies $g(e_2)=1$ and $r<0$ implies $g(e_2)=-1$,
only one test can succeed in these cases. If $r=0$, both
tests can succeed.

The statement for $\ell^\infty_2$ follows using the computable isometry (\ref{eq:S})
\qed
\end{proof}

\section{The Hahn-Banach Theorem for $\ell^1$}
\label{sec:ell1}

For their proof of the reduction $\WKL\leqW\HBT$ Gherardi and Marcone~\cite{GM09}
followed the construction of Brown and Simpson~\cite{BS86a,Sim09},
who in turn used ideas of similar constructions of
 Bishop~\cite{Bis67}, Metakides, Nerode and Shore~\cite{MN82,MNS85}.
We briefly recall the construction due to Gherardi and Marcone.
To every instance $(p, q)\in\IN^\IN\times\IN^\IN$ of the separation problem, i.e., with
$\range(p)\cap\range(q)=\varnothing$, they associate a Banach space 
$(X_{p,q},\|\cdot\|_{p,q})$ that is defined as follows. Firstly,
\[
\delta_n :=
\begin{cases}
	2^{-k-1} & \text{if } k=\min\{i\in\IN:p(i)=n\} \text{ exists}\\
	-2^{-k-1} & \text{if }k=\min\{i\in\IN:q(i)=n\} \text{ exists}\\
	0 & \text{otherwise}.
\end{cases}
\]
and then $\varepsilon_n:=\frac{1-\delta_n}{1+\delta_n}$ for all $n\in\IN$.
Then one can obtain norms on $\IR^2$ by
\[
\|(\alpha,\beta)\|_{p,q,n} :=
\begin{cases}
	\max(|\varepsilon_n\alpha+\beta|,\ |\alpha-\beta|) & \text{if }\varepsilon_n<1,\\[1.2ex]
	\max(|\alpha+\beta|,\ |\varepsilon_n^{-1}\alpha-\beta|) & \text{if }\varepsilon_n>1,\\[1ex]
	\max(|\alpha+\beta|,\ |\alpha-\beta|) & \text{if }\varepsilon_n=1.
\end{cases}
\]
For $x=(\alpha_n,\beta_n)_{n\in\IN}\in(\IR^2)^\IN$ we use the notation
$x_n=(\alpha_n,\beta_n)$ and $x_{n,0}=\alpha_n$ and $x_{n,1}=\beta_n$.
Now one obtains a Banach space $(X_{p,q},\|\cdot\|_{p,q})$ with
	\[
X_{p,q}
:=\Bigl\{x\in(\IR^2)^\IN: \|x\|_{p,q}<\infty\Bigr\},
\text{ where }
\|x\|_{p,q}:=\sum_{n=0}^\infty 2^{-n-1}\|x_n\|_{p,q,n}.
\]
That is, $\|\cdot\|_{p,q}$ is a $\ell^1$--sum of weighted $\ell^\infty$--blocks in $(\IR^2,\|\cdot\|_{p,q,n})$.
On this Banach space Gherardi and Marcone considered the functional
\begin{eqnarray}
	f_{p,q}:A_{p,q}\to\IR,x\mapsto\sum_{n=0}^\infty 2^{-n-1}x_{n,0} 
\label{eq:functional}
\end{eqnarray}
for the subspace
\begin{eqnarray}
A_{p,q}:=\{x\in X_{p,q}: x_{n,1}=0\text{ for all $n\in\IN$}\}.
\label{eq:subspace}
\end{eqnarray}
Then $\|f_{p,q}\|=1$ and from a functional 
$g:X_{p,q}\to\IR$ that extends $f_{p,q}$ with $\|g\|=1$ one can compute
a set $B\In\IN$ that separates $\range(p)$ and $\range(q)$, as
\begin{eqnarray}
n\in\range(p)\TO g(z_n)=-2^{-n-1}\text{ and }n\in\range(q)\TO g(z_n)=+2^{-n-1}
\label{eq:B}
\end{eqnarray}
for all $n\in\IN$, where $z_n\in X_{p,q}$ is defined
by $(z_n)_n=(0,1)$ and $(z_n)_k=(0,0)$ for $k\not=n$.

In order to prove that the Hahn-Banach theorem exhibits its maximal
power for $\ell^1$, we construct a computable linear isometry.

\begin{proposition}
	There exists a linear isometry $F_{p,q}:X_{p,q}\to\ell^1$ that
	is computable uniformly in instances $(p,q)\in\IN^\IN\times\IN^\IN$ of the separation problem.
\end{proposition}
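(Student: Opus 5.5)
Each two-dimensional block $(\IR^2,\|\cdot\|_{p,q,n})$ has a parallelogram as unit ball, so it is linearly isometric to $\ell^\infty_2$ and, via the isometry $S$ from (\ref{eq:S}), to $\ell^1_2$. The plan is to find, uniformly in $n$ and computably in $(p,q)$, a linear isometry $T_n:(\IR^2,\|\cdot\|_{p,q,n})\to\ell^1_2$. We then set
\[F_{p,q}(x):=R\bigl((2^{-n-1}T_n(x_n))_{n\in\IN}\bigr)\in\ell^1(\IN\times\{0,1\})\cong\ell^1,\]
using the isometry $R$ from (\ref{eq:R}). Linearity is immediate. For the isometry property,
\[\|F_{p,q}(x)\|_1=\sum_n 2^{-n-1}\|T_n x_n\|_1=\sum_n2^{-n-1}\|x_n\|_{p,q,n}=\|x\|_{p,q},\]
which also shows that $F_{p,q}$ is well-defined on all of $X_{p,q}$.

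The maps $T_n$ are read off from the case distinction in the definition of $\|\cdot\|_{p,q,n}$. If $\varepsilon_n\le1$, then $\|(\alpha,\beta)\|_{p,q,n}=\|L_n(\alpha,\beta)\|_\infty$ with $L_n(\alpha,\beta)=(\varepsilon_n\alpha+\beta,\alpha-\beta)$. If $\varepsilon_n\ge1$, we take $L_n(\alpha,\beta)=(\alpha+\beta,\varepsilon_n^{-1}\alpha-\beta)$. Both formulas agree with the case $\varepsilon_n=1$. Composing with $S$ gives
\[T_n(\alpha,\beta)=\Bigl(\tfrac{(\varepsilon_n-1)\alpha+2\beta}{2},\tfrac{(\varepsilon_n+1)\alpha}{2}\Bigr)\]
in the first case, and an analogous expression in the second. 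Each $L_n$ has nonzero determinant (it equals $-(\varepsilon_n+1)$, respectively $-(1+\varepsilon_n^{-1})$), so $T_n$ is a bijective linear isometry. Surjectivity is not actually needed for the statement, but injectivity is automatic.

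The main obstacle is uniform computability. We cannot decide whether $\varepsilon_n<1$, $>1$ or $=1$, because this depends on whether $n$ ever appears in $\range(p)$ or $\range(q)$. To get around this, we decide the case \emph{by the stage $k$ at which $n$ appears}. Indeed $\delta_n$ is computable as a real uniformly in $n$: it equals $\pm2^{-k-1}$ with $k$ the first occurrence of $n$, and $|\delta_n|\le2^{-s-1}$ if no occurrence is seen by stage $s$. Hence $\varepsilon_n$ and its inverse are computable too.

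I would try to write a single formula for $T_n$ that is continuous in $\varepsilon_n$ across the value $1$. Concretely, one rescales by $\min(1,\varepsilon_n)$ and $\min(1,\varepsilon_n^{-1})$, which are both computable. The guiding observation is that the vertices of the unit ball of $\|\cdot\|_{p,q,n}$ depend continuously on $\varepsilon_n$, and an isometry onto $\ell^1_2$ is determined by mapping those four vertices to $\pm(1,0),\pm(0,1)$ (up to the sign pattern). Explicitly, the unit ball has vertices $\pm c_n^{-1}(1,1)$ and $\pm c_n^{-1}(1,-\min(1,\varepsilon_n))$ (with $c_n$ computed from $\varepsilon_n$ in each case). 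So I would define $T_n^{-1}$ by sending $(1,0)$ and $(0,1)$ to half-sums and half-differences of the vertices of the corresponding $\ell^\infty$ square, with the vertices expressed via $\min(1,\varepsilon_n)$ and $\max(1,\varepsilon_n)^{-1}$. I would then check that both branches give the same formula, so that $T_n$ depends continuously and hence computably on $\varepsilon_n$.

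If such a continuous formula fails, the fallback is the dovetailing argument. While no occurrence of $n$ has been seen by stage $s$, $\varepsilon_n$ is within $O(2^{-s})$ of $1$, and the two branch formulas differ by $O(|\varepsilon_n-1|)$. So we output approximations from the $\varepsilon_n=1$ formula until an occurrence is seen, then switch to the correct branch; the error stays within the precision already committed.

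Finally, computability of $F_{p,q}$ as a map on points follows from Proposition~\ref{prop:linear}. It suffices to compute the images $F_{p,q}(e_{n,i})=R(2^{-n-1}T_n e_i\text{ in block }n)$ of the fundamental sequence uniformly, together with the bound $M=1$.
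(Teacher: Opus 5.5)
Your proposal is correct and is essentially the paper's proof. It uses the same blockwise maps $T_n$ (your $L_n$), composed with $S$, weighted by $2^{-n-1}$ and interleaved into $\ell^1$; the paper merely asserts the uniform computability of $T_n$ that you justify, and indeed $L_n(\alpha,\beta)=(\min(1,\varepsilon_n)\alpha+\beta,\ \min(1,\varepsilon_n^{-1})\alpha-\beta)$ covers all three cases at once, so neither the vertex construction nor the dovetailing fallback is needed. Your side description of the unit-ball vertices is inaccurate (they are $\pm(0,1)$ and $\pm(1+|\delta_n|,\delta_n)$), but nothing in the argument depends on it.
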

\begin{proof}
The linear map $T_n:\IR^2\to\IR^2$ with
\[
T_n(\alpha,\beta):=
\begin{cases}
	(\varepsilon_n\alpha+\beta,\ \alpha-\beta) & \text{if }\varepsilon_n<1\\
	(\alpha+\beta,\ \varepsilon_n^{-1}\alpha-\beta) & \text{if }\varepsilon_n>1\\
	(\alpha+\beta,\ \alpha-\beta) & \text{if }\varepsilon_n=1
\end{cases}
\]
is computable uniformly in $n$ relative to $p,q$ 
and satisfies 
\[\|(\alpha,\beta)\|_{p,q,n} \;=\; \|T_n(\alpha,\beta)\|_\infty.\]
Now we use the computable linear map $S:\IR^2\to\IR^2$ from (\ref{eq:S})
Hence, for $ST_n=S\circ T_n$ and $x\in\IR^2$ we obtain
$|(ST_n(x))_0|+|(ST_n(x))_1|=\|ST_n(x)\|_1=\|x\|_{p,q,n}$.
Now we can define $F_{p,q}:X_{p,q}\to\ell^1$
by
\[F_{p,q}(x)(2n+i):=(2^{-n-1}ST_n(x_n))_i\]
for all $x\in X_{p,q}$, $n\in\IN$ and $i\in\{0,1\}$. 
Finally, we obtain
\[\|F_{p,q}(x)\|_1=\sum_{n=0}^\infty2^{-n-1}\|ST_n(x_n)\|_1=\sum_{n=0}^\infty2^{-n-1}\|x_n\|_{p,q,n}=\|x\|_{p,q}\]
and $F_{p,q}$ is computable uniformly in $(p,q)$.
\qed
\end{proof}

Since $F_{p,q}:X_{p,q}\to\ell^1$ is an injective computable linear map on computable
Banach spaces, it has a computable inverse $F_{p,q}^{-1}:\range(F_{p,q})\to X_{p,q}$
by the computable version of the Banach Inverse Mapping Theorem~\cite[Corollary~5.3]{Bra09}.
However, this does not automatically hold uniformly in $p,q$. 
But since $F_{p,q}$ is even an isometry, the operator norm of the inverse is $1$
and hence we obtain uniformity in $p,q$ by~\cite[Theorem~5.9]{Bra09}.
This allows us to obtain the following conclusion.

\begin{proposition}
	\label{prop:ell1}
	$\SEP\leqW\HBT_{\ell^1}$
\end{proposition}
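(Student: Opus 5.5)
The plan is to transport the Gherardi--Marcone construction into $\ell^1$ via the isometry $F_{p,q}$ from the preceding proposition. Given an instance $(p,q)$ of $\SEP$, we compute the image $F_{p,q}(A_{p,q})$ as a closed subspace of $\ell^1$ and the functional $f:=f_{p,q}\circ F_{p,q}^{-1}$ on it. We feed both to $\HBT_{\ell^1}$, pull the resulting extension back along $F_{p,q}$, and read off a separating set via (\ref{eq:B}).

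\emph{Step 1: the subspace.} $A_{p,q}$ is the closed linear span of the vectors $a_n\in X_{p,q}$ with $(a_n)_n=(1,0)$ and $(a_n)_k=(0,0)$ for $k\neq n$. These $a_n$ are uniformly computable relative to $(p,q)$. Rational linear combinations of them form a dense sequence in $A_{p,q}$, so their images under $F_{p,q}$ yield a name of $B:=F_{p,q}(A_{p,q})\in\SS(\ell^1)$. Since $F_{p,q}$ is an isometry, $B$ is closed.

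\emph{Step 2: the functional.} We define $f:=f_{p,q}\circ F_{p,q}^{-1}:B\to\IR$. It is linear with $\|f\|=\|f_{p,q}\|=1$, because $F_{p,q}$ is an isometry. To obtain $f$ as a point of $\CC(B)$, we use the uniform computability of $F_{p,q}^{-1}$ on $\range(F_{p,q})$ provided by~\cite[Theorem~5.9]{Bra09} together with the computability of $f_{p,q}$. Alternatively, Proposition~\ref{prop:linear} applies directly: $f(F_{p,q}(a_n))=2^{-n-1}$ on the fundamental sequence $(F_{p,q}(a_n))_n$ of $B$, with bound $M=1$.

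\emph{Step 3: decoding.} Let $g\in\HBT_{\ell^1}(f,B)$. Then $g':=g\circ F_{p,q}:X_{p,q}\to\IR$ is linear, extends $f_{p,q}$, and satisfies $\|g'\|\le\|g\|\le1$; hence $\|g'\|=1$. We compute the reals $g'(z_n)=g(F_{p,q}(z_n))$ uniformly in $n$. By (\ref{eq:B}), $g'(z_n)=-2^{-n-1}$ for $n\in\range(p)$ and $g'(z_n)=2^{-n-1}$ for $n\in\range(q)$. For each $n$ we run the two semi-decidable tests $g'(z_n)<2^{-n-1}$ and $g'(z_n)>-2^{-n-1}$ in parallel. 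If the first succeeds first, we put $n$ into the output set; otherwise we leave it out. For $n\in\range(p)$ only the first test can succeed, and for $n\in\range(q)$ only the second, so the output $C$ satisfies $\range(p)\In C\In\IN\setminus\range(q)$. This gives the reduction, with $K$ computing $(f,B)$ from $(p,q)$ and $H$ using both $(p,q)$ and $g$.

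The main obstacle is Step 2: the computation must be uniform in $(p,q)$. The representation of $X_{p,q}$ itself depends on $(p,q)$, so we should work only with names in $\ell^1$. The cleanest route is the Proposition~\ref{prop:linear} route, which avoids inverting $F_{p,q}$ altogether. It requires checking that $(F_{p,q}(a_n))_n$ is a legitimate fundamental sequence of $B$ in the computable-normed-space sense relative to $(p,q)$.
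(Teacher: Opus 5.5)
Your proposal is correct and follows essentially the same route as the paper: you transport $f_{p,q}$ and $A_{p,q}$ into $\ell^1$ via the isometry $F_{p,q}$, apply $\HBT_{\ell^1}$, pull the extension back along $F_{p,q}$, and decode a separating set using (\ref{eq:B}). For obtaining $f$, your alternative of applying Proposition~\ref{prop:linear} with values $f(F_{p,q}(a_n))=2^{-n-1}$ and bound $1$ is a legitimate shortcut that avoids the uniform inverse of $F_{p,q}$ the paper invokes, and your parallel-test decoding makes explicit a step the paper leaves implicit.
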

\begin{proof}
Given an instance $(p,q)\in\IN^\IN\times\IN^\IN$ of the separation
problem, we can compute the functional $f_{p,q}:A_{p,q}\to\IR$ from
(\ref{eq:functional})
and we obtain a functional $f:A\to\IR$ with $A:=F_{p,q}(A_{p,q})$
by $f:=f_{p,q}\circ F_{p,q}^{-1}$. Since everything is uniform in $p,q$,
we can compute $(f,A)\in\CC_\SS(\ell^1)$. 
Now we can apply $\HBT_{\ell^1}$ in order to obtain a linear extension $g:\ell^1\to\IR$
of $f$ with $\|g\|\leq\|f\|$.
Then $g':=g\circ F_{p,q}$ is a linear functional $g':X_{p,q}\to\IR$ that
extends $f_{p,q}$. Because $F_{p,q}$ is an isometry we have $\|g'\|=\|f_{p,q}\|=1$. Hence
we obtain the values $g'(z_n)=g\circ F_{p,q}(z_n)$ from which we can determine
a separating set $B\in\SEP(p,q)$ using (\ref{eq:B}).
\QED
\end{proof}

Now we obtain the following result.

\begin{corollary}
	$\HBT_{\ell^1}\equivW\WKL$.
\end{corollary}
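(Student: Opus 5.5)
The plan is to combine the two reductions already available. For the upper bound $\HBT_{\ell^1}\leqW\WKL$, I will simply instantiate the earlier theorem that $\HBT_X\leqW\WKL$ for every computable normed space $X$. That theorem was obtained by iterating $\HBT_X^1$ along the fundamental sequence and then using $\IVT^\infty\equivW\WKL$ from Proposition~\ref{prop:WKL-IVT}. I take $X=\ell^1$ with its standard computable normed space structure, namely the unit vectors as fundamental sequence; this is clearly a computable normed space, so nothing new is needed here.

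For the lower bound $\WKL\leqW\HBT_{\ell^1}$, I will use Proposition~\ref{prop:WKL-IVT}, which gives $\WKL\equivW\SEP$, and then Proposition~\ref{prop:ell1}, which gives $\SEP\leqW\HBT_{\ell^1}$. Weihrauch reducibility is transitive, so this yields $\WKL\leqW\HBT_{\ell^1}$. Together with the upper bound, this gives $\HBT_{\ell^1}\equivW\WKL$.

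The only point that needs care, and where I expect the real content to sit, is inside Proposition~\ref{prop:ell1}. The domain of $\HBT_{\ell^1}$ requires an input in $\CC_\SS(\ell^1)$, so we must compute a dense sequence in $A=F_{p,q}(A_{p,q})$ and a name of $f=f_{p,q}\circ F_{p,q}^{-1}$ uniformly in $(p,q)$. Two ingredients handle this. First, $A_{p,q}$ is computably separable uniformly: take the images of the rational combinations of the vectors $(1,0)$ placed in coordinate $n$. Second, the inverse $F^{-1}_{p,q}$ is uniformly computable because $F_{p,q}$ is an isometry. Both are already supplied in the text preceding the corollary.

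Finally, I need to check that the output extension has norm exactly $\|f\|=1$. This holds because $g$ extends $f$, which gives $\|g\|\geq\|f\|$, while the problem definition gives $\|g\|\leq\|f\|$. Hence (\ref{eq:B}) applies to $g'$. With these checks in place, the corollary follows immediately by chaining the reductions.
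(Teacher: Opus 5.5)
Your proposal is correct and follows the paper's argument. The upper bound is the instance $X=\ell^1$ of the general theorem $\HBT_X\leqW\IVT^\infty\equivW\WKL$, and the lower bound is the chain $\WKL\equivW\SEP\leqW\HBT_{\ell^1}$ from Propositions~\ref{prop:WKL-IVT} and~\ref{prop:ell1}; your added check that every output satisfies $\|g\|=\|f\|=1$, so that (\ref{eq:B}) applies to $g\circ F_{p,q}$, is exactly what the paper uses implicitly in the proof of Proposition~\ref{prop:ell1}.
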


\section{The Hahn-Banach Theorem for Located Subspaces}
\label{sec:located}

The proof of Proposition~\ref{prop:ell1} shifts the 
complexity from the space $X_{p,q}$ into the functional $f:A\to\IR$ and
the subspace $A\In\ell^1$ using the isometry $F_{p,q}$.
Hence, it is a relevant question whether the complexity of the Hahn-Banach 
theorem on $\ell^1$ can be reduced by providing more information on the
subspace $A$. We will prove that this is not the case, even if we provide
the subspace in form of its distance function $d_A$, i.e., if $\SS(\ell^1)$ is replaced
by $\LL(\ell^1)$ in the definition of $\HBT_{\ell^1}$.
We first prove that the subspaces from (\ref{eq:subspace}) can be computed
as points in $\LL(X_{p,q})$.
  
\begin{proposition}
	The sets $A_{p,q}\in\LL(X_{p,q})$ can be computed uniformly
	in instances $(p,q)\in\IN^\IN\times\IN^\IN$ of the separation problem.
\end{proposition}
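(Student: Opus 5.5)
The plan is to compute the distance function $d_{A_{p,q}}:X_{p,q}\to\IR$ uniformly in $(p,q)$. The decomposition below exposes it as a $1$--Lipschitz function that is explicitly computable on a dense set of points.

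\textbf{Step 1: the parameters are computable reals.} The reals $\delta_n$, and hence $\varepsilon_n$, are computable uniformly in $n$ relative to $(p,q)$. Inspecting $p(0),\dots,p(k)$ and $q(0),\dots,q(k)$ either reveals the value of $\delta_n$ exactly, or guarantees $|\delta_n|\leq 2^{-k-2}$. The case distinction $\varepsilon_n<1$, $\varepsilon_n>1$, $\varepsilon_n=1$ is not decidable, so I would not use it. Instead I would note that the block norm $\|(\alpha,\beta)\|_{p,q,n}$ depends continuously on $\varepsilon_n\in(0,\infty)$. This holds because the three formulas agree at $\varepsilon_n=1$, and each branch is continuous on its side. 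Hence $N(\varepsilon,\alpha,\beta)$, the three-case formula, is a computable function on $(0,\infty)\times\IR^2$. It is computed by evaluating both branches on the overlapping regions $\varepsilon<1+\eta$ and $\varepsilon>1-\eta$ and using their agreement up to small errors near $1$. In this way $\|\cdot\|_{p,q,n}$ becomes uniformly computable.

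\textbf{Step 2: the distance splits blockwise.} Because the norm is a weighted $\ell^1$--sum of block norms, and $A_{p,q}$ constrains each block independently (second coordinate $0$, first free), we get
\[
d_{A_{p,q}}(x)=\sum_{n=0}^\infty 2^{-n-1}\,\rho_n(\beta_n),\qquad
\rho_n(\beta):=\inf_{t\in\IR}\|(t,\beta)\|_{p,q,n}.
\]
One checks that the infimum of the sum is the sum of the blockwise infima, and that the blockwise minimizers yield an element of $A_{p,q}$ since $|t|$ can be bounded by a multiple of $|\beta|$. Indeed, $\|(t,\beta)\|_{p,q,n}\geq |t|/c - |\beta|$ for a constant $c$ independent of $n$, because $\varepsilon_n\in[1/3,3]$. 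So the infimum is attained on the compact interval $|t|\leq C|\beta|$ with a uniform constant $C$. The value $\rho_n(\beta)$ is then the minimum of a uniformly computable continuous function over a computable compact interval, and is therefore computable uniformly in $n$, $\beta$ and $(p,q)$.

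\textbf{Step 3: assemble $d_{A_{p,q}}$ as a point of $\CC(X_{p,q})$.} For a finitely supported rational point $s$ from the dense sequence of $X_{p,q}$, the value $d_{A_{p,q}}(s)$ is a finite sum of values $\rho_n$ and is thus computable. The distance function is $1$--Lipschitz. Given a name of $x\in X_{p,q}$, I take a fast Cauchy sequence $(s_k)$ with $\|x-s_k\|_{p,q}\leq 2^{-k}$. Then $d_{A_{p,q}}(s_k)$ approximates $d_{A_{p,q}}(x)$ to within $2^{-k}$. This yields a realizer uniformly in $(p,q)$, i.e.\ a name in $\LL(X_{p,q})$.

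I expect the main obstacle to be Step 1, namely the uniform computability of the block norms without deciding the sign of $\delta_n$. The continuity-in-$\varepsilon$ argument must be carried out carefully, including a uniform Lipschitz-type modulus in $\varepsilon$ on bounded sets of $(\alpha,\beta)$. Once that is settled, everything else consists of standard compact minimization and Lipschitz extension arguments.
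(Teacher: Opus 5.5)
Your argument is correct and uses the same blockwise decomposition of $d_{A_{p,q}}$ as the paper. The difference is that you evaluate the block infimum $\rho_n(\beta)$ abstractly, by compact minimization, whereas the paper computes it in closed form. Each block norm has the form $\max(|at+\beta|,|bt-\beta|)$ with $a,b>0$. If $t\beta\geq0$ the first term equals $a|t|+|\beta|$, and if $t\beta<0$ the second equals $b|t|+|\beta|$. Hence $\|(t,\beta)\|_{p,q,n}\geq|\beta|=\|(0,\beta)\|_{p,q,n}$, so $\rho_n(\beta)=|\beta|$ is attained at $t=0$, and
\[d_{A_{p,q}}((\alpha_n,\beta_n)_n)=\sum_{n=0}^\infty 2^{-n-1}|\beta_n|.\]
Beyond this, the paper needs two things. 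The companion bound $\|(\alpha,\beta)\|_{p,q,n}\geq\frac12|\alpha|$ shows that $(\alpha_n,0)_n\in X_{p,q}$; your uniform-constant estimate plays the same role. The second is that this expression is $1$--Lipschitz with respect to $\|\cdot\|_{p,q}$. This makes your Step~2 minimization over a compact interval unnecessary. Since the formula does not involve $\varepsilon_n$ at all, uniformity in $(p,q)$ is automatic. Your Step~1 is also not really the main obstacle. It is already part of the standing assumption that $X_{p,q}$ is a computable normed space uniformly in $(p,q)$. It also follows at once from writing the block norm as the single formula $\max(|\min(\varepsilon_n,1)\alpha+\beta|,|\min(\varepsilon_n^{-1},1)\alpha-\beta|)$, with no case distinction on $\varepsilon_n$. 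What your route buys is generality. It shows that subspaces of this coordinate type are located in any weighted $\ell^1$--sum of uniformly computable block norms that are uniformly equivalent to a fixed norm, without evaluating the block distances explicitly. The paper's route buys a two-line proof and an explicit formula for the distance function.
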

\begin{proof}
	The definition of $\|\cdot\|_{p,q,n}$ implies that
	$\|(\alpha,\beta)\|_{p,q,n}\geq|\beta|=\|(0,\beta)\|_{p,q,n}$ 
	and $\|(\alpha,\beta)\|_{p,q,n}\geq\frac{1}{2}|\alpha|=\frac{1}{2}\|(\alpha,0)\|_{p,q,n}$
	for all $(\alpha,\beta)\in\IR^2$. 
	In particular, $(\alpha_n,\beta_n)_n\in X_{p,q}$ implies $(\alpha_n,0)_n\in X_{p,q}$. 
	Hence, the following
	infimum for $(\alpha_n,\beta_n)_n\in X_{p,q}$ is attained for $\alpha_n'=\alpha_n$ with the given value
	\[d_{A_{p,q}}((\alpha_n,\beta_n)_n)=\inf_{(\alpha_n',0)_n\in X_{p,q}}\sum_{n=0}^\infty2^{-n-1}\|(\alpha_n-\alpha_n',\beta_n)\|_{p,q,n}=\sum_{n=0}^\infty2^{-n-1}|\beta_n|.\]
    Thus, $d_{A_{p,q}}$ is computable in $p,q$, since
    $\sum_{n=0}^\infty2^{-n-1}|\beta_n|\leq\|(\alpha_n,\beta_n)\|_{p,q}$.
	\qed
\end{proof}

Next we prove that locatedness is preserved by computable linear isometries.

\begin{proposition}
	Let $F:X\to Y$ be a computable bijective linear isometry on computable normed
	spaces $(X,\|\cdot\|_X)$ and $(Y,\|\cdot\|_Y)$.
	Then 
	\[F:\LL(X)\to\LL(Y),A\mapsto F(A)\]
    is computable. This even holds uniformly in $F$.
\end{proposition}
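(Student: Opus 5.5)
The plan is to show that the distance function of $F(A)$ is $d_{F(A)}=d_A\circ F^{-1}$, and then that this composition is computable from names of $d_A$ and $F$. Let $y\in Y$ and put $x:=F^{-1}(y)$. Since $F$ is a bijective isometry, the map $z\mapsto F(z)$ is a bijection $A\to F(A)$ with $\|y-F(z)\|_Y=\|F(x)-F(z)\|_Y=\|F(x-z)\|_Y=\|x-z\|_X$. Taking the infimum over $z\in A$ gives
\[d_{F(A)}(y)=\inf_{z\in A}\|y-F(z)\|_Y=\inf_{z\in A}\|x-z\|_X=d_A(F^{-1}(y)).\]
The set $F(A)$ is closed, because $F$ is a homeomorphism (an isometry onto $Y$), and it is non-empty. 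Hence $F(A)\in\LL(Y)$ and $d_{F(A)}=d_A\circ F^{-1}$.

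It then remains to compute $F^{-1}$ as a point of $\CC(Y,X)$ from $F$, and this is the step I expect to be the main obstacle. Here $F^{-1}$ is a bijective bounded linear map with $\|F^{-1}\|=1$. I would invoke the same result already used above for $F_{p,q}^{-1}$, namely the computable Banach Inverse Mapping Theorem in its uniform version~\cite[Theorem~5.9]{Bra09}. Given the bound $1$ on the operator norm of the inverse, it yields $F^{-1}$ uniformly in $F$.

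Alternatively, one could argue directly in the style of Proposition~\ref{prop:linear}. For a rational linear combination $s$ of the fundamental sequence of $Y$, we search for a rational combination $t$ in $X$ with $\|F(t)-s\|_Y<2^{-k}$. This search is semi-decidable and succeeds by surjectivity and density. Because $F$ is an isometry, $\|t-F^{-1}(s)\|_X<2^{-k}$, so $F^{-1}$ is computable on a dense set with modulus $1$ and therefore computable on all of $Y$.

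Finally, composition $\CC(X,\IR)\times\CC(Y,X)\to\CC(Y,\IR)$ is computable, so $d_{F(A)}=d_A\circ F^{-1}$ is computable from a name of $d_A$ and a name of $F$. This gives computability of $A\mapsto F(A)$ on $\LL$, uniformly in $F$.
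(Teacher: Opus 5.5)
Your proof is correct and follows the paper's argument. You derive $d_{F(A)}=d_A\circ F^{-1}$ from the isometry property, obtain $F^{-1}$ uniformly from \cite[Theorem~5.9]{Bra09} using $\|F^{-1}\|=1$, and compose. Your alternative direct computation of $F^{-1}$, which searches for $t$ with $\|F(t)-s\|_Y<2^{-k}$ and uses the isometry as modulus, is also valid; it is a more elementary route that avoids the inverse mapping theorem entirely.
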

\begin{proof}
	Since $F$ is a bijective linear isometry, we obtain
	\begin{eqnarray*}
		d_{F(A)}(x)&=&\inf_{z\in F(A)}\|x-z\|_Y=\inf_{y\in A}\|F(F^{-1}(x)-y)\|_Y\\
		&=&\inf_{y\in A}\|F^{-1}(x)-y\|_X=d_A(F^{-1}(x))
	\end{eqnarray*}
	for all $x\in Y$. If $F$ is given, then we can compute $F^{-1}$ by~\cite[Theorem~5.9]{Bra09} as $\|F^{-1}\|=1$. If, additionally,
	$A\in\LL(X)$ is given in form of $d_A\in\CC(X)$, then we can 
	compute $d_{F(A)}\in\CC(Y)$ by the equation above and hence
	$F(A)\in\LL(Y)$.
	\QED
\end{proof}

Now we can transfer the proof of Proposition~\ref{prop:ell1}
from $\SS(\ell^1)$ to $\LL(\ell^1)$.

\begin{corollary}
	$\HBT_{\ell^1}\equivW\WKL$, even if the space $\SS(\ell^1)$ in the definition
	of $\HBT_{\ell^1}$ is replaced by $\LL(\ell^1)$.
\end{corollary}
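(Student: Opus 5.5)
We split the claim into an upper and a lower bound for the located version of $\HBT_{\ell^1}$ (the problem with $\SS(\ell^1)$ replaced by $\LL(\ell^1)$).

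\emph{Upper bound.} Since $\id:\LL(\ell^1)\to\SS(\ell^1)$ is computable, any name of $(f,A)\in\CC_\LL(\ell^1)$ can be converted computably into a name of $(f,A)\in\CC_\SS(\ell^1)$. The function component is unchanged and only the set component is translated. Hence the located version is Weihrauch reducible to the original $\HBT_{\ell^1}$, and by the theorem $\HBT_X\leqW\WKL$ it is reducible to $\WKL$.

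\emph{Lower bound.} We rerun the proof of Proposition~\ref{prop:ell1}, now producing a name in $\CC_\LL(\ell^1)$. Given an instance $(p,q)$ of $\SEP$, we proceed as follows.
\begin{enumerate}
\item By the proposition on located subspaces, compute $A_{p,q}\in\LL(X_{p,q})$ uniformly in $(p,q)$.
\item The isometry $F_{p,q}:X_{p,q}\to\ell^1$ is computable uniformly in $(p,q)$, but it is not surjective. So we cannot apply the preservation proposition verbatim. We work instead with the range $Y:=F_{p,q}(X_{p,q})$, or directly with the formula $d_{F(A)}(x)=\inf_{z\in A}\|x-F(z)\|$ on all of $\ell^1$.
\item Use $f=f_{p,q}\circ F_{p,q}^{-1}$ as before, and decode the separating set from $g\circ F_{p,q}$ via (\ref{eq:B}).
\end{enumerate}
By Proposition~\ref{prop:WKL-IVT}, $\SEP\equivW\WKL$, which gives the lower bound.

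\emph{Main obstacle: computing $d_{F(A_{p,q})}$ on all of $\ell^1$.} For this I would use the block structure. $F_{p,q}$ maps block $n$ of $X_{p,q}$ bijectively onto coordinates $2n,2n+1$ of $\ell^1$, since $ST_n$ is invertible. So $F_{p,q}$ is in fact surjective onto $\ell^1$. Indeed, for $y\in\ell^1$ the preimage $x_n=(ST_n)^{-1}(2^{n+1}y_{2n},2^{n+1}y_{2n+1})$ has finite norm, because $\|x\|_{p,q}=\|y\|_1$. Thus $F_{p,q}$ is a bijective isometry and the preservation proposition applies directly.

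It remains to confirm that $F_{p,q}^{-1}$ is computable uniformly in $(p,q)$. We can cite \cite[Theorem~5.9]{Bra09} with operator norm $1$. Alternatively, we compute it blockwise: $(ST_n)^{-1}$ is computable uniformly in $n$ relative to $(p,q)$, because $\varepsilon_n$ is computable and $T_n$ has a computable explicit inverse. The case distinction $\varepsilon_n<1$, $=1$, $>1$ is decidable from $p,q$ only in the limit, so the blockwise route needs care. The cited theorem avoids this issue, which is why I would rely on it.

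Then $d_{A}=d_{A_{p,q}}\circ F_{p,q}^{-1}$ with $A=F_{p,q}(A_{p,q})$ is computable. The rest of the proof of Proposition~\ref{prop:ell1} goes through unchanged.
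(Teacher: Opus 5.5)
Your proposal is correct and follows the paper's route. The upper bound comes from the computable identity $\LL(\ell^1)\to\SS(\ell^1)$ together with $\HBT_X\leqW\WKL$, and the lower bound from combining the computability of $A_{p,q}\in\LL(X_{p,q})$ with preservation of locatedness under the bijective isometry $F_{p,q}$ inside the proof of Proposition~\ref{prop:ell1}.

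Two small cleanups. Your blockwise check that $F_{p,q}$ is surjective is right, and the paper uses this implicitly when it applies the bijective-isometry proposition, so you should delete the contrary remark in step~2. The case distinction you worry about is also harmless: $T_n(\alpha,\beta)=(\min(\varepsilon_n,1)\alpha+\beta,\ \min(\varepsilon_n^{-1},1)\alpha-\beta)$ is uniformly computable without deciding the case, and so is its inverse, since the determinant has absolute value at least $1$.
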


The following corollary strengthens Corollary~\ref{cor:MNS85}.

\begin{corollary}
	There exists a computable linear functional $f:A\to\IR$ on a located closed
	subspace $A\In\ell^1$ with $\|f\|=1$ and without a computable
	linear extension ${g:\ell^1\to\IR}$ with $\|g\|=1$. 
\end{corollary}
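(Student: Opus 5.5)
We obtain the statement by instantiating the located version of the reduction $\SEP\leqW\HBT_{\ell^1}$ at a computable instance of the separation problem that has no computable solution. Take computable $p,q\in\IN^\IN$ enumerating a pair of computably inseparable c.e.\ sets, for instance $\range(p)=\{n:\varphi_n(n)=0\}$ and $\range(q)=\{n:\varphi_n(n)=1\}$. These ranges are disjoint, and no computable $B\In\IN$ satisfies $\range(p)\In B\In\IN\setminus\range(q)$.

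Next we apply the previous propositions with these fixed $p,q$. We get the computable isometry $F_{p,q}:X_{p,q}\to\ell^1$ and, by the computable Banach inverse mapping theorem with norm bound $1$, its computable inverse on the range. The subspace $A_{p,q}$ is computable as a point of $\LL(X_{p,q})$. We set $A:=F_{p,q}(A_{p,q})$ and $f:=f_{p,q}\circ F_{p,q}^{-1}$.

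One detail needs care. The proposition on preservation of locatedness is stated for \emph{bijective} isometries, but $F_{p,q}$ is in fact onto $\ell^1$. Each $ST_n$ is a linear bijection of $\IR^2$: $T_n$ has nonzero determinant because $\varepsilon_n>0$, and $\varepsilon_n\neq1$ gives determinant $-\varepsilon_n-1$ or a similar nonzero value. Any $y\in\ell^1$ is then the image of the blockwise preimage $x_n:=2^{n+1}(ST_n)^{-1}(y_{2n},y_{2n+1})$, and $\|x\|_{p,q}=\|y\|_1<\infty$. So that proposition applies, and $A$ is a located closed subspace of $\ell^1$. Since $f$ is computable and $\|f\|=\|f_{p,q}\|=1$, we have $(f,A)$ as a computable instance in $\CC_\LL(\ell^1)$.

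Now suppose, for a contradiction, that $g:\ell^1\to\IR$ is a computable linear extension of $f$ with $\|g\|=1$. Then $g':=g\circ F_{p,q}$ is a computable norm-one extension of $f_{p,q}$ on $X_{p,q}$. By (\ref{eq:B}), the values $g'(z_n)$ are computable uniformly in $n$, and they equal $-2^{-n-1}$ for $n\in\range(p)$ and $+2^{-n-1}$ for $n\in\range(q)$. Define $B:=\{n: g'(z_n)<0\}$ after a decidable refinement: compute $g'(z_n)\cdot 2^{n+1}$ to precision $1/2$ and put $n\in B$ iff the approximation is $<0$. The approximation is within $1/2$ of $-1$ when $n\in\range(p)$ and within $1/2$ of $+1$ when $n\in\range(q)$, so this is decidable on all $n$ and separates $\range(p)$ from $\range(q)$. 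This gives a computable separating set, which is the desired contradiction.

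The main obstacle is this last step, turning the real values $g'(z_n)$ into a \emph{total} computable set, together with confirming the cited fact (\ref{eq:B}) for norm-one extensions. The approximation trick handles the first issue. The remaining ingredients, surjectivity of $F_{p,q}$ and the uniformity of all computations for fixed computable $p,q$, are routine.
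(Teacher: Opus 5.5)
Your proof is correct and follows the paper's route: you instantiate the located version of the reduction $\SEP\leqW\HBT_{\ell^1}$ at a computable instance with no computable solution (the paper alludes to a Kleene tree, and your computably inseparable pair serves the same purpose), and then use (\ref{eq:B}) to turn a computable norm-one extension into a computable separating set. Two details the paper leaves implicit are handled explicitly in your argument: that $F_{p,q}$ is onto $\ell^1$, which the locatedness-transfer proposition needs, and the rounding step that makes $B$ decidable.
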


\begin{credits}
\subsubsection{\ackname} 
We acknowledge funding by the German Research Foundation (DFG, Deutsche Forschungsgemeinschaft) -- project number 554999067 and 
by the National Research Foundation of South Africa (NRF) -- grant number 151597.
\end{credits}

%
%
%
\bibliographystyle{splncs04}
\bibliography{C:/Users/\user/Documents/Spaces/Research/Bibliography/lit}

\end{document}